\documentclass[11pt,letterpaper,reqno]{amsart}
\usepackage{tikz}
\usetikzlibrary{positioning, shapes.geometric, arrows.meta, calc, positioning}
\usepackage{amssymb}
\usepackage{amsmath}
\usepackage{amsthm}
\usepackage{amsfonts}
\usepackage{bbm}
\usepackage{enumitem} 
\usepackage{pgfplots}
\pgfplotsset{compat=1.18} 
\usepackage{booktabs}
\usepackage{graphicx}
\usepackage[T1]{fontenc}
\usepackage{doi}
\usepackage{float} 
\addtolength{\hoffset}{-1.5cm}\addtolength{\textwidth}{3cm}
\addtolength{\voffset}{-1cm}\addtolength{\textheight}{2cm}

\usepackage{bookmark}
\usepackage{hyperref}
\hypersetup{pdfstartview={FitH}}

\newcommand{\abs}[1]{| #1 |}

\newtheorem{thm}{Theorem}[section]
\newtheorem{lem}[thm]{Lemma}
\newtheorem{prop}[thm]{Proposition}
\newtheorem{cor}[thm]{Corollary}
\newtheorem{claim}[thm]{Claim}

\newtheorem{prob}[thm]{Problem}

\theoremstyle{definition}

\newtheorem{defn}[thm]{Definition}
\numberwithin{equation}{section}



\newcommand{\lcm}{\operatorname{lcm}}
\makeatother

\begin{document}


\title{Harmonic LCM patterns and sunflower-free capacity}

\author[Q.~Tang]{Quanyu Tang}
\author[S.~Zhang]{Shengtong Zhang}

\address{School of Mathematics and Statistics, Xi'an Jiaotong University, Xi'an 710049, P. R. China}
\email{tang\_quanyu@163.com}

\address{Department of Mathematics, Stanford University, Stanford, CA 94305, USA}
\email{stzh1555@stanford.edu}

\subjclass[2020]{Primary 11B75; Secondary 05D05, 11N25}

\keywords{LCM patterns, harmonic sums, sunflower conjecture}

\begin{abstract}
Fix an integer $k\ge 3$. Call a set $A\subseteq [N]$ \emph{LCM-$k$-free} if it does not contain
distinct $a_1,\dots,a_k$ such that $\mathrm{lcm}(a_i,a_j)$ is the same for all $1\le i<j\le k$.
Define
$$
f_k(N):=\max\left\{\sum_{a\in A}\frac1a: A\subseteq [N] \text{ is LCM-$k$-free}\right\}.
$$
Addressing a problem of Erd\H{o}s, we prove an explicit unconditional lower bound
$$
f_k(N)\ge (\log N)^{c_k-o(1)},
\qquad
c_k:=\frac{k-2}{e((k-2)!)^{1/(k-2)}}.
$$
Let $F_k(n)$ denote the maximum size of a $k$-sunflower-free family of subsets of $[n]$, and define the Erd\H{o}s--Szemer\'edi $k$-sunflower-free capacity by $\mu_k^{\mathrm S}:=\limsup_{n\to\infty}F_k(n)^{1/n}$. Motivated by a remark of Erd\H{o}s relating this problem to the sunflower conjecture, we show that
$$
(\log N)^{\log\mu_k^{\mathrm S}-o(1)}  \le f_k(N) \ll (\log N)^{\mu_k^{\mathrm S}-1+o(1)}.
$$
Furthermore, we show that the Erd\H{o}s--Szemer\'edi sunflower conjecture fails for this fixed $k$
(i.e. $\mu_k^{\mathrm S}=2$) if and only if $f_k(N)=(\log N)^{1-o(1)}$.
\end{abstract}


\maketitle


\section{Introduction}

\subsection{The problem and background}Fix integers $k\ge 3$ and $N\ge 1$. Following Erd\H{o}s~\cite{Er70}, we study subsets
$A\subseteq [N]:=\{1,2,\dots,N\}$ that avoid certain ``equal--LCM'' configurations.
We say that $A$ \emph{contains an LCM-$k$-tuple} if there exist distinct
$a_1,\dots,a_k\in A$ such that all pairwise least common multiples coincide, i.e.
\[
\lcm(a_i,a_j)=\lcm(a_1,a_2)\qquad (1\le i<j\le k).
\]
We say $A$ is \emph{LCM-$k$-free} if it contains no LCM-$k$-tuple.

In~\cite[p.~124]{Er70}, Erd\H{o}s introduced the counting function $g(k,x)$, defined as
the largest integer $s$ for which one can find
\[
1\le a_1<\cdots<a_s\le x
\]
with the property that no $k$ of the $a_i$ form an LCM-$k$-tuple. Erd\H{o}s
originally conjectured that $g(k,x)=o(x)$ for every $k\ge 3$, but later proved that
this fails for all $k\ge 4$; see~\cite[Theorem~1]{Er70}. In particular, for each
$k\ge 4$ there exists $\delta_k>0$ such that
\[
g(k,x)\ge \delta_k x
\]
for all sufficiently large $x$.

Erd\H{o}s also considered a harmonic analogue. He showed~\cite[Eq.~(19)]{Er70} that for each
$k\ge 3$ there exists $b_k>0$ such that whenever $x$ is sufficiently large and
$1\le a_1<\cdots<a_s\le x$ satisfy
\begin{equation}\label{eq:Erdos-threshold}
\sum_{i=1}^s \frac{1}{a_i} > b_k \log x,
\end{equation}
then some $k$ of the $a_i$ form an LCM-$k$-tuple. In~\cite[p.~127]{Er70}, Erd\H{o}s wrote:
\begin{quote}
\emph{I do not know how much \eqref{eq:Erdos-threshold} can be weakened so that there should always
be $k$ $a$'s every two of which have the same least common multiple.}
\end{quote}
If we define
\[
f_k(N)\;:=\;\max\left\{\sum_{a\in A}\frac1a:\ A\subseteq [N]\ \text{is LCM-$k$-free}\right\}.
\]
then Erd\H{o}s is asking the following question.
\begin{prob}\label{prob:856}
Let $k\ge 3$. Estimate $f_k(N)$.
\end{prob}

Problem~\ref{prob:856} appears as Problem~\#856 on Bloom's Erd\H{o}s Problems website~\cite{EP856}
and may be viewed as the harmonic counterpart of the counting problem introduced by Erd\H{o}s
in~\cite[p.~124]{Er70}.

A trivial bound is
\[
f_k(N)\le \sum_{n\le N}\frac1n = \log N + O(1).
\]

Erd\H{o}s~\cite{Er70} provided a short proof of the improved upper bound $f_k(N)\ll_k \log N/\log\log N$. It is observed in the comment section on the Erd\H{o}s Problems website~\cite{EP856}
that one can further improve this to $f_k(N)\leq \log N \cdot \exp\left(-\Omega_k\left(\frac{\log\log N}{\log\log\log N}\right)\right)$. To our knowledge, no lower bounds of comparable strength were previously available.

\subsection{A polylogarithmic lower bound}Our first result provides an explicit polylogarithmic lower bound.

\begin{thm}\label{thm:lower}
Fix an integer $k\ge 3$. Then, as $N\to\infty$,
\[
f_k(N)\ge (\log N)^{c_k-o(1)},
\qquad
c_k:=\frac{k-2}{e ((k-2)!)^{1/(k-2)}}.
\]
\end{thm}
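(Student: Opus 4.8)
The plan is to exhibit, for every large $N$, an explicit LCM-$k$-free set $A\subseteq[N]$ built as a ``block product set'' of primes and to estimate $\sum_{a\in A}1/a$. Put $t:=k-2$ and $\mathcal L:=\log\log N$. First I would split an initial segment of the primes into consecutive blocks: choose $1=y_0<y_1<y_2<\cdots$ so that, by Mertens' theorem, each block $B_i:=\{\,q\text{ prime}:y_{i-1}<q\le y_i\,\}$ has harmonic mass $\sum_{q\in B_i}1/q=c+o(1)$ for a constant $c$ to be optimized; this forces $\log\log y_i=ic+O(1)$, hence $\log y_i\asymp e^{\,ic}$. Let $m$ be the largest index with $\prod_{i=1}^m y_i^{\,t}\le N$; since $\log\prod_{i=1}^m y_i^{\,t}=t\sum_i\log y_i\asymp t\,e^{mc}$, this gives $mc=\mathcal L-O_k(1)$, i.e. $m\asymp\mathcal L$. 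Then set
\[
A:=\Bigl\{\ \prod_{i=1}^m\prod_{q\in T_i}q\ :\ T_i\subseteq B_i,\ |T_i|=t\ \text{for all }i\ \Bigr\}\subseteq[N].
\]

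The first key step is to verify that $A$ is LCM-$k$-free. Suppose $n^{(1)},\dots,n^{(k)}\in A$ are distinct and all of their pairwise least common multiples coincide. Writing $n^{(\ell)}=\prod_i\prod_{q\in T_i^{(\ell)}}q$ and comparing exponents at the primes of a fixed block $B_i$, equality of the pairwise LCMs forces $T_i^{(\ell)}\cup T_i^{(\ell')}$ to be the same set $U_i$ for all $\ell<\ell'$; equivalently the complements $U_i\setminus T_i^{(\ell)}$ ($\ell=1,\dots,k$) are pairwise disjoint subsets of $U_i$. Since every $T_i^{(\ell)}$ has size $t$, these $k$ complements share a common size $d=|U_i|-t$; being pairwise disjoint inside $U_i$ they satisfy $kd\le|U_i|=t+d$, whence $(k-1)d\le t=k-2$ and therefore $d=0$, i.e. $T_i^{(\ell)}=U_i$ for every $\ell$. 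As this holds in every block, all $n^{(\ell)}$ are equal, a contradiction. (It is this rigidity --- forcing equality block by block --- that is stronger than a generic sunflower-free estimate and is what yields a clean constant.)

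The second step is the harmonic-sum estimate. By unique factorization,
\[
\sum_{a\in A}\frac1a=\prod_{i=1}^m\ \sum_{\substack{T\subseteq B_i\\ |T|=t}}\ \prod_{q\in T}\frac1q=\prod_{i=1}^m e_t\!\bigl((1/q)_{q\in B_i}\bigr),
\]
the $t$-th elementary symmetric function of the reciprocals of the primes in $B_i$. From the elementary inequality $t!\,e_t\ge p_1^{\,t}-\binom t2 p_2\,p_1^{\,t-2}$ with $p_1=\sum_{q\in B_i}1/q=c+o(1)$ and $p_2=\sum_{q\in B_i}1/q^2$ (which is $o(1)$ for all but $O_k(1)$ of the blocks and a bounded constant strictly below $1$ after scaling for the rest), one gets $e_t((1/q)_{q\in B_i})\ge\Omega_k(1)\,c^{\,t}/t!$ with the $\Omega_k(1)$ being $1-o(1)$ except for finitely many blocks, so $\sum_{a\in A}1/a\gg_k(c^{\,t}/t!)^m$. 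Since $mc=\mathcal L-O_k(1)$, this is $\exp\!\bigl(\mathcal L\cdot\tfrac{t\log c-\log t!}{c}+O_k(1)\bigr)$, and the function $c\mapsto(t\log c-\log t!)/c$ is maximized at $\log c=1+\tfrac1t\log t!$, i.e. $c=e(t!)^{1/t}$, where its value is precisely $t/(e(t!)^{1/t})=c_k$. Hence $\sum_{a\in A}1/a\gg_k(\log N)^{c_k}$, which in particular yields Theorem~\ref{thm:lower}.

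The main difficulty is not the construction or the freeness proof --- both are short --- but the bookkeeping needed to push the harmonic sum through the block boundaries: one must choose the cut-offs $y_i$ so that simultaneously (i) every product stays $\le N$ with the right exponent on $\log N$, (ii) each block carries harmonic mass within $o(1)$ of $c$ (achievable via Mertens, with the discretization error dominated by $1/y_1$ and hence summable), and (iii) the estimate $e_t\approx c^{\,t}/t!$ holds with the $O_k(1)$ ``small'' blocks --- where the primes are bounded --- costing only a bounded multiplicative constant. All of this is routine once the parameters $c=e((k-2)!)^{1/(k-2)}$ and $m\approx(\log\log N)/c$ are fixed; the calculus optimization landing exactly on $c_k$ is the payoff of the argument.
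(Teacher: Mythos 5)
Your proposal is essentially the paper's proof: prime bucketing, exactly $k-2$ primes per bucket, a rigidity lemma showing that $k$ distinct $(k-2)$-sets cannot all share the same pairwise union (forcing block-by-block equality), a product formula reducing the harmonic sum to elementary symmetric polynomials, and a calculus optimization of the per-block mass at $c=e((k-2)!)^{1/(k-2)}$. The freeness argument you give (disjoint complements $U_i\setminus T_i^{(\ell)}$, hence $(k-1)d\le k-2$, hence $d=0$) is precisely the paper's Lemma~\ref{lem:r=k-2-union}.

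The one place you diverge is the bucketing scheme. You partition \emph{all} small primes into consecutive intervals $B_i$ with $\log\log y_i\approx ic$, which forces you to handle a bounded number of initial blocks where the reciprocals $1/q$ are not small (so $p_2=\sum 1/q^2$ is a nontrivial constant) and to argue that these cost only an $O_k(1)$ multiplicative factor; you would then need to check, for small $t$, that $p_1^2 > \binom{t}{2}p_2$ holds with the chosen $c$, which it does but is an extra verification. The paper instead restricts to primes in $[\exp((\log\log N)^{1/3}),\,N^{1/(rt)}]$ and buckets them greedily, so that every prime used satisfies $1/p\le\delta=L^{-1/2}$; this uniformly bounds the repeated-index error $R_i\le\binom{r}{2}\delta(B+\delta)^{r-1}$ across all blocks and avoids any ``bad block'' case analysis, at the modest cost of a $(\log N)^{-o(1)}$ factor in the final bound. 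Your route, once the $O_k(1)$ bookkeeping is carried out, in fact delivers the slightly stronger $\gg_k(\log N)^{c_k}$ rather than $(\log N)^{c_k-o(1)}$; the paper's cleaner range choice trades that gain for simpler uniform estimates.
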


Our construction uses a ``prime bucketing'' scheme, which will be used for all lower bound constructions. We partition primes in a suitable interval
into many disjoint blocks of comparable harmonic sum and form squarefree integers by choosing
exactly $k-2$ primes from each block. A short combinatorial lemma forces LCM-$k$-freeness, while the
harmonic sum factorizes and can be optimized over a free parameter.

\subsection{Connections with the sunflower problem}
We next investigate the optimal exponent in the polylogarithmic bound.  Erd\H{o}s suggested that Problem~\ref{prob:856} is related to the sunflower problem. Recall that a collection of $k$ distinct sets $S_1,\dots,S_k$ is called a \emph{$k$-sunflower} if
\[
S_i\cap S_j = S_1\cap S_2 \qquad \text{for all }1\le i<j\le k.
\]
This common intersection is called the \emph{kernel}. A family of sets $\mathcal F$ is said to be
\emph{$k$-sunflower-free} if it contains no $k$ distinct members forming a $k$-sunflower. Dually, a collection of $k$ distinct sets $S_1,\dots,S_k$ is called a \emph{$k$-cosunflower} if \[
S_i\cup S_j = S_1\cup S_2 \qquad \text{for all }1\le i<j\le k.
\] A family $\mathcal F$ is \emph{$k$-cosunflower-free} if it
contains no $k$ distinct members forming a $k$-cosunflower.


In~\cite[p.~127]{Er70}, Erd\H{o}s wrote:
\begin{quote}
\emph{Problem~\ref{prob:856} seems connected with the following combinatorial problem:}
\end{quote}

\begin{prob}\label{prob:857}
Let $m=m(n,k)$ be minimal such that any distinct collection of sets
$A_1,\dots,A_m\subseteq [n]$ contains a $k$-sunflower. Estimate $m(n,k)$, or ideally determine
its asymptotic behavior.
\end{prob}

Problem~\ref{prob:857} is widely known as the Erd\H{o}s--Szemer\'edi sunflower conjecture~\cite{ErSz78},
and it appears as Problem~\#857 on Bloom's Erd\H{o}s Problems website~\cite{EP857}. Following the
notation of~\cite{NaSa17}, let $F_k(n)$ be the maximum size of a $k$-sunflower-free family
$\mathcal{F}\subseteq 2^{[n]}$, and define the \emph{Erd\H{o}s--Szemer\'edi $k$-sunflower-free capacity}
\[
\mu_k^{\mathrm S}:=\limsup_{n\to\infty} F_k(n)^{1/n}.
\]Trivially $\mu_k^{\mathrm S}\le 2$, and the celebrated Erd\H{o}s--Szemer\'edi sunflower conjecture~\cite{ErSz78} asserts that
$\mu_k^{\mathrm S}<2$ for every $k\ge 3$. A standard tensor power argument\footnote{Here is a concise description of the argument. Let $\mathcal{F}_n\subseteq 2^{[n]}$ be a $k$-sunflower-free family. Choose $r\in\{0,1,\dots,n\}$ maximizing $|\mathcal{F}_n\cap\binom{[n]}{r}|$, and set $\mathcal{F}_n':=\mathcal{F}_n\cap\binom{[n]}{r}$, so that $|\mathcal{F}_n'|\ge \frac{1}{n+1}|\mathcal{F}_n|$. For each integer $t\ge 2$, partition $[tn]$ into $t$ subsets $S_1\sqcup\cdots\sqcup S_t$ with $|S_i|=n$. For each $i$ fix a bijection $\pi_i:[n]\to S_i$ and define a copy of $\mathcal{F}_n'$ on $S_i$ by $\mathcal{F}_n'^{(i)}:=\{\pi_i(F):F\in\mathcal{F}_n'\}\subseteq 2^{S_i}$. Then one checks that $\mathcal{F}_{n}^{\otimes t}:=\{F_1\sqcup\cdots\sqcup F_t:\ F_i\in\mathcal{F}_n'^{(i)}\}\subseteq 2^{[tn]}$ is $k$-sunflower-free and satisfies $|\mathcal{F}_{n}^{\otimes t}|=|\mathcal{F}_n'|^t\ge (|\mathcal{F}_n|/(n+1))^t$.} shows that the limsup can be replaced with a limit:
\begin{equation}
    \label{eq:lim}
  \mu_k^{\mathrm S} =\lim_{n\to\infty} F_k(n)^{1/n}.  
\end{equation}


In this paper, we make Erd\H{o}s's suggested connection between Problem~\ref{prob:856} and Problem~\ref{prob:857} precise by showing the Erd\H{o}s--Szemer\'edi sunflower conjecture is equivalent to a tight bound on $f_k(N)$.

\begin{thm}\label{thm:equivalence}
For each $k\ge 3$, we have $\mu_k^{\mathrm S} = 2$ if and only if $f_k(N) = (\log N)^{1 - o(1)}$.
\end{thm}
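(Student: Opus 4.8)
The plan is to prove the two implications separately, using throughout the dictionary that complementation inside a fixed ground set is a bijection between $k$‑sunflower‑free and $k$‑cosunflower‑free families, so that $\mu_k^{\mathrm S}$ is equally the $k$‑cosunflower‑free capacity, and that for squarefree $a,b$ with prime supports $S_a,S_b$ one has $\lcm(a,b)\leftrightarrow S_a\cup S_b$; hence a family of squarefree integers is LCM‑$k$‑free exactly when the family of prime supports is $k$‑cosunflower‑free.

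The direction $f_k(N)=(\log N)^{1-o(1)}\Rightarrow\mu_k^{\mathrm S}=2$ is immediate from the upper estimate $f_k(N)\ll(\log N)^{\mu_k^{\mathrm S}-1+o(1)}$ established above: writing both facts in the form ``for every $\eps>0$, eventually $(\log N)^{1-\eps}\le f_k(N)\le C_\eps(\log N)^{\mu_k^{\mathrm S}-1+\eps}$'', the resulting inequality $(\log N)^{2-\mu_k^{\mathrm S}-2\eps}\le C_\eps$ can hold for all large $N$ only if $\mu_k^{\mathrm S}\ge 2-2\eps$; letting $\eps\to0$ and using the trivial $\mu_k^{\mathrm S}\le2$ gives $\mu_k^{\mathrm S}=2$.

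For $\mu_k^{\mathrm S}=2\Rightarrow f_k(N)=(\log N)^{1-o(1)}$, the upper bound is trivial ($f_k(N)\le\log N+O(1)$), so everything lies in producing an LCM‑$k$‑free $A\subseteq[N]$ with $\sum_{a\in A}1/a\ge(\log N)^{1-o(1)}$; note the generic bound $f_k(N)\ge(\log N)^{\log\mu_k^{\mathrm S}-o(1)}$ is far too weak here, contributing only exponent $\log2$. Instead I would argue directly: given $\eps>0$, use $\mu_k^{\mathrm S}=\lim_nF_k(n)^{1/n}=2$ to fix, for arbitrarily large $m$, a $k$‑cosunflower‑free family $\mathcal G\subseteq2^{[m]}$ with $|\mathcal G|\ge(2-\eps)^m$, and run the prime‑bucketing scheme of Theorem~\ref{thm:lower} with buckets indexed by $[m]$ and membership dictated by $\mathcal G$: partition an initial segment of primes into disjoint, increasingly placed super‑buckets $B_1,\dots,B_m$ whose harmonic sums $\sigma_1,\dots,\sigma_m$ are free parameters with $\sum_j\sigma_j\le\log\log N-O(1)$, arranged so that $\prod_{j\in G}q_j\le N$ whenever $G\in\mathcal G$ and $q_j\in B_j$. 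Then $A=\{\prod_{j\in G}q_j:G\in\mathcal G,\ q_j\in B_j\}$ is LCM‑$k$‑free --- the cosunflower‑freeness of $\mathcal G$ is used exactly once, and, since a single bucket layer is involved, no uniformity of $\mathcal G$ is needed --- and $\sum_{a\in A}1/a=\sum_{G\in\mathcal G}\prod_{j\in G}\sigma_j$. One then optimizes over the weights $\sigma_j$, over $m$, and over how many such blocks are multiplied together (and, for large $k$, possibly interleaves with the $\binom{\cdot}{k-2}$‑per‑block backbone already giving exponent $c_k$), the aim being to make the exponent tend to $1$ as $\eps\to0$.

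The main obstacle is precisely this final optimization. The naive choice --- balanced weights $\sigma_j\equiv(\log\log N)/m$ with $\mathcal G$ as above --- produces only exponent about $(2-\eps)^{m/r}/(e\,m/r)$, and since $|\mathcal G|\ge(2-\eps)^m$ forces the controlling layer to satisfy $r/m\to\tfrac12$, this converges merely to $2/e<1$; the same $2/e$ ceiling is reached by every ``one fixed uniform cosunflower‑free family, then tensor'' variant. Pushing the exponent to $1-o(1)$ thus requires a structurally different mechanism for converting the rate‑$2$ hypothesis into harmonic weight. Two natural candidates are: (a) permit prime powers, so the combinatorial object becomes a maximum cosunflower‑free subfamily of a bounded grid $\{0,1,\dots,M\}^m$, and argue that its per‑coordinate capacity can be forced close to $M+1$ once the Boolean capacity equals $2$, letting $M\to\infty$; (b) a multi‑scale bucketing in which the rate‑$2$ pattern is reinstalled recursively at successively finer scales so that the per‑scale gains compound rather than merely average. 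Carrying one of these through --- and re‑verifying LCM‑$k$‑freeness for the modified construction --- is, I expect, the technical heart of the argument; it would also exhibit the exponent of $f_k(N)$ as a monotone functional of the cosunflower‑free capacity that equals $1$ exactly when the capacity is $2$.
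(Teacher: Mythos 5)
The easy implication $f_k(N)=(\log N)^{1-o(1)}\Rightarrow\mu_k^{\mathrm S}=2$ you handle exactly as the paper does, by comparison with Theorem~\ref{thm:B-implies-A}; no issue there.

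The hard implication has a genuine gap, and you say so yourself. Your diagnosis of the obstruction is in fact correct and quite sharp: if you take a single $k$-cosunflower-free family $\mathcal G\subseteq 2^{[m]}$ of size $\approx 2^m$, the weight of $\mathcal G$ concentrates on sets of size $\approx m/2$, and the resulting harmonic sum $\sum_{G\in\mathcal G}\prod_{j\in G}\sigma_j$ with balanced $\sigma_j\equiv L/m$ and budget $m\sigma\approx L=\log\log N$ is maximized at $m\approx 4L/e$, giving exponent exactly $2/e<1$. However, neither of the two candidate repairs you sketch is what actually closes the gap, and I do not see how to make either one work. Allowing prime powers changes the combinatorial shadow from Boolean cosunflowers to a $\max$-structure on $\{0,\dots,M\}^m$, a different object whose capacity is not controlled by $\mu_k^{\mathrm S}$ without further argument; and ``multi-scale compounding'' is not given any mechanism that escapes the $2/e$ ceiling, which applies to every scheme that sums $\prod_{j\in G}\sigma_j$ over a near-full-density family.

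The missing ingredient is a \emph{density shift}. The paper uses the Alon--Shpilka--Umans equivalence (Theorem~\ref{thm:ASU}): the assertion $\mu_k^{\mathrm S}=2$ is equivalent to the failure, for every fixed rational $\beta>1$, of the hypothesis that uniform families of density close to full are sunflower-free only up to $\binom{\beta n}{n}^{1-\eta}$. Consequently one gets, for every small $\alpha$ and $\eta$, an $\lfloor\alpha n\rfloor$-uniform $k$-cosunflower-free family of size $\ge\binom{n}{\lfloor\alpha n\rfloor}^{1-\eta}$. Taking $\alpha$ small makes the binary entropy $H(\alpha)$ small, so the loss factor $\binom{n}{\alpha n}^{\eta}\approx 2^{\eta H(\alpha)n}$ is negligible relative to $2^n$. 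The paper then converts this into a product-measure statement (Lemma~\ref{lem:weighted-sunflower}, via the blow-up Proposition~\ref{prop:blowup-ok}): under a product measure $\mu_w$ with $\sum w_a=W$ and all $w_a$ small, there is a $k$-cosunflower-free $\mathcal A$ with $\mu_w(\mathcal A)\ge 2^{-\epsilon W}$. Finally, setting $w_p=1/(p+1)$ over primes $p\in[K,T]$, and noting $\sum_{Q\in\mathcal Q}\prod_{p\in Q}p^{-1}=\mu_w(\mathcal Q)/\mu_w(\varnothing)$ with $1/\mu_w(\varnothing)\asymp\log T$, gives $\sum 1/a\gg(\log T)^{1-O(\epsilon)}$; a tail estimate restricts to $[N]$ and a suitable choice of $T$ finishes. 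None of this density-amplification machinery appears in your proposal, and without it the argument, as you yourself observe, stalls at exponent $2/e$.
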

The implication ``$\Rightarrow$'' is obtained by constructing a LCM-$k$-free set of squarefree integers
whose prime-support sets form a large cosunflower-free family in an appropriate weighted
product measure. The implication ``$\Leftarrow$'' follows from Theorem~\ref{thm:B-implies-A} below. 
\begin{thm}\label{thm:B-implies-A}
Fix $k\ge 3$. Then, as $N\to\infty$,
\[
f_k(N)\ll (\log N)^{\mu_k^{\mathrm S}-1+o(1)}.
\]
\end{thm}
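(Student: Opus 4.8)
The plan is to derive the bound from the capacity estimate $F_k(m)\le(\mu_k^{\mathrm S}+\eps)^{m}$ by a short first‑moment argument, once each integer has been re‑encoded as a set so that LCM‑$k$‑freeness turns into cosunflower‑freeness.

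\textbf{Step 1: the encoding.} To each $n\ge1$ I would attach the finite set $\sigma(n):=\{(p,v_p(n)):p\mid n\}$ inside the ground set $X:=\{(p,e):p\text{ prime},\ e\ge1\}$, where $v_p$ denotes the $p$‑adic valuation. This $\sigma$ is injective, $1/n=\prod_{(p,e)\in\sigma(n)}p^{-e}$, and if $a\in A\subseteq[N]$ then $p^e\le a\le N$ for every $(p,e)\in\sigma(a)$, so $\sigma(a)$ lies in the \emph{finite} set $X_N:=\{(p,e):p^e\le N\}$. The structural lemma I would prove is: if $n_1,\dots,n_k$ are distinct and $\sigma(n_1),\dots,\sigma(n_k)$ form a $k$‑cosunflower, then $n_1,\dots,n_k$ form an LCM‑$k$‑tuple. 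The argument is brief: let $T$ be the common union $\sigma(n_i)\cup\sigma(n_j)$; a pigeonhole step — the only place $k\ge3$ is used — shows $T$ contains at most one pair $(p,e)$ for each prime $p$, so $T=\{(p,\phi(p)):p\in P_0\}$ for some $P_0$ and $\phi$. Setting $L:=\prod_{p\in P_0}p^{\phi(p)}$, the inclusions $\sigma(n_i)\subseteq T$ give $v_p(n_i)\in\{0,\phi(p)\}$ for all $i,p$, and the fact that $(p,\phi(p))\in\sigma(n_i)\cup\sigma(n_j)$ shows $v_p(n_i),v_p(n_j)$ are never both $0$; hence $v_p(\lcm(n_i,n_j))=\phi(p)=v_p(L)$ for every $i<j$, i.e.\ all pairwise lcms equal $L$. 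Contrapositively, if $A$ is LCM‑$k$‑free then $\mathcal S:=\{\sigma(a):a\in A\}$ is a $k$‑cosunflower‑free family of subsets of $X_N$.

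\textbf{Step 2: sampling and first moment.} Take a random $I\subseteq X_N$ with the events $\{(p,e)\in I\}$ independent and $\PP[(p,e)\in I]=p^{-e}$. The subfamily $\mathcal S\cap2^{I}$ of members contained in $I$ is again cosunflower‑free; since (by complementing inside the ground set) cosunflower‑free families of subsets of an $m$‑set have size at most $F_k(m)$, we get $|\mathcal S\cap2^{I}|\le F_k(|I|)$ for every outcome. On the other hand, by injectivity of $\sigma$,
\[
\sum_{a\in A}\frac1a=\sum_{a\in A}\PP[\sigma(a)\subseteq I]=\E\,\bigl|\mathcal S\cap2^{I}\bigr|\le\E\,F_k(|I|).
\]
Fixing $\eps>0$ and a constant $C_\eps$ with $F_k(m)\le C_\eps(\mu_k^{\mathrm S}+\eps)^{m}$ for all $m\ge0$ (finite by the definition of $\mu_k^{\mathrm S}$, since $F_k(m)\le 2^m$ for the finitely many small $m$), and writing $|I|=\sum_{(p,e)\in X_N}\ind[(p,e)\in I]$ as a sum of independent indicators,
\[
\E\,F_k(|I|)\le C_\eps\prod_{(p,e)\in X_N}\bigl(1+(\mu_k^{\mathrm S}+\eps-1)\,p^{-e}\bigr)\le C_\eps\exp\!\Bigl((\mu_k^{\mathrm S}+\eps-1)\!\!\sum_{(p,e)\in X_N}\!\!p^{-e}\Bigr).
\]
Finally $\sum_{(p,e)\in X_N}p^{-e}\le\sum_{p\le N}\tfrac1{p-1}=\sum_{p\le N}\tfrac1p+O(1)=\log\log N+O(1)$ by Mertens' theorem. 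Combining, $\sum_{a\in A}1/a\ll_\eps(\log N)^{\mu_k^{\mathrm S}-1+\eps}$, and letting $\eps\downarrow0$ yields $f_k(N)\le(\log N)^{\mu_k^{\mathrm S}-1+o(1)}$.

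\textbf{Where the difficulty lies.} All the content sits in the choice of $\sigma$ in Step 1, which must do two incompatible‑looking things at once: convert LCM‑$k$‑freeness into cosunflower‑freeness, \emph{and} keep the total sampling weight $\sum_{(p,e)\in X_N}p^{-e}$ at the scale $\log\log N+O(1)$ rather than $\Theta(\log N)$. The obvious ``layered'' encoding $n\mapsto\{(p,j):1\le j\le v_p(n)\}$ achieves the first goal but has weight sum $\sum_{p\le N}\frac1p\cdot\#\{e:p^e\le N\}=\Theta(\log N)$ and is useless; recording only the top exponent at each prime fixes the weight while — and this is the point one must check carefully — still behaving well enough under $\lcm$ for the pigeonhole argument (hence the hypothesis $k\ge3$) to go through. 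The probabilistic step, the complementation duality for $F_k$, and the Mertens estimate are all routine.
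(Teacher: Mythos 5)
Your proof is correct and takes a genuinely different, more elementary route than the paper's. The paper's argument is a double-counting scheme: for each $m\le N^2$ and each $\ell$, it associates to $m$ the family $\mathcal F_m$ of $\ell$-element prime subsets $S\subseteq\mathcal P(m)$ with $m/\prod_{p\in S}p\in A$, shows $\mathcal F_m$ is $k$-sunflower-free, and from $\sum_{m}r_\ell(m)/m$ obtains $\bigl(\sum_{a\in A}1/a\bigr)\cdot H_\ell(N)\ll(\log N)^{\mu_k^{\mathrm S}+o(1)}$, where $H_\ell(N)$ is the harmonic sum of squarefree $\ell$-almost primes up to $N$. It must then invoke a Sathe--Selberg type lower bound $H_\ell(N)\gg_\eta(\log\log N)^\ell/\ell!$ (proved in the appendix) and optimize over $\ell$ via Stirling. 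You instead encode every integer globally by $\sigma(n)=\{(p,v_p(n)):p\mid n\}$, so that $1/n$ is exactly the probability that $\sigma(n)\subseteq I$ under independent sampling with weights $p^{-e}$, and the whole estimate collapses to one first-moment calculation sealed by independence and Mertens. Your pigeonhole lemma --- that the common union $T$ meets each prime fiber at most once, since two disjoint subsets of $[k]$ each of size at least $k-1$ would force $k\le 2$ --- plays the role of the paper's sunflower-freeness claim for $\mathcal F_m$, but is applied once globally rather than per-integer $m$; it is correct as you state it, and the complementation duality used to bound $|\mathcal S\cap 2^I|$ by $F_k(|I|)$ is likewise fine. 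The net gain is that you avoid the Sathe--Selberg machinery, the $\ell$-optimization, and the Stirling computation entirely; the trade-off is that you must carry full prime valuations in the encoding, which is precisely what makes the pigeonhole step necessary. Both approaches are valid; yours is shorter and self-contained.
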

Its proof is a double-counting argument similar to Erd\H{o}s's original argument in~\cite{Er70}. To each integer $m$ we
associate a set system on the prime divisors of $m$ and show that LCM-$k$-freeness forces these set
systems to be $k$-sunflower-free. We then combine the resulting estimate with a Sathe--Selberg type
lower bound for the harmonic sum of squarefree almost primes. 

Finally, we wonder how good a lower bound for $f_k(N)$ we can establish given knowledge of $\mu_k^{\mathrm S}$. While we can obtain a lower bound by following the proof of Theorem~\ref{thm:equivalence}, it is not good when $\mu_k^{\mathrm S}$ is smaller than $2$. Towards an improvement on Theorem~\ref{thm:lower}, we show that
\begin{thm}\label{thm:mu-lb-implies-f-lb}
Fix $k\ge 3$. Then, as $N\to\infty$,
\[f_k(N) \ge (\log N)^{\log\mu_k^{\mathrm S} - o(1)}.\]
\end{thm}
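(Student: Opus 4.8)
The plan is to convert a large $k$-sunflower-free family into an LCM-$k$-free set of squarefree integers with large harmonic sum, using the prime-bucketing scheme. Fix a large parameter $n$; by \eqref{eq:lim} we may pick a $k$-sunflower-free family $\mathcal{F}\subseteq 2^{[n]}$ with $|\mathcal{F}|\ge (\mu_k^{\mathrm S}-\eps)^n$. The idea is to encode the ground set $[n]$ by $n$ disjoint blocks of primes, each block of comparable harmonic sum, and to encode each set $S\in\mathcal{F}$ by choosing one prime from every block indexed by an element of $S$ (and no prime from the other blocks); call the resulting squarefree integer $m_S$. Concretely, choose primes up to $M$ and partition those in a dyadic-type range into $n$ blocks $B_1,\dots,B_n$, each with harmonic sum roughly $\frac{1}{n}\log\log M$; then each block contributes a factor of about $\frac{1}{n}\log\log M$ to the harmonic sum, and summing a single prime per chosen block, over all $S\in\mathcal{F}$, gives
\[
\sum_{S\in\mathcal{F}}\frac{1}{m_S} \;=\; \sum_{S\in\mathcal{F}}\prod_{i\in S}\Bigl(\sum_{p\in B_i}\frac1p\Bigr)
\;\ge\; |\mathcal{F}|\cdot\Bigl(\tfrac{c}{n}\log\log M\Bigr)^{r_{\min}},
\]
where $r_{\min}$ is a lower bound on $|S|$ we can enforce (see below). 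We need these $m_S$ to lie in $[N]$ with $N$ as small as possible relative to $M$; since each $m_S$ is a product of at most $n$ primes each at most $M$, we have $m_S\le M^n$, so taking $N=M^n$ suffices, i.e. $\log\log N = \log\log M + O(\log n)$.

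The combinatorial heart is to check LCM-$k$-freeness. Suppose $m_{S_1},\dots,m_{S_k}$ are distinct and all pairwise LCMs agree. For each block $B_i$, the chosen prime (if any) in $m_{S_j}$ is determined, and $p\in B_i$ divides $\mathrm{lcm}(m_{S_a},m_{S_b})$ iff $p\mid m_{S_a}$ or $p\mid m_{S_b}$. Equality of all pairwise LCMs forces, block by block, that the prime-support sets $T_j:=\{\,i : B_i\text{ contributes a prime to }m_{S_j}\,\}=S_j$ satisfy $S_a\cup S_b$ independent of the pair, i.e. the $S_j$ form a $k$-cosunflower; and moreover within each block all $m_{S_j}$ that use that block must use the \emph{same} prime. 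By the standard duality between sunflowers and cosunflowers (complementation within $[n]$), a $k$-cosunflower-free family has the same capacity as a $k$-sunflower-free one, so we lose nothing by instead taking $\mathcal{F}$ to be $k$-cosunflower-free of size $(\mu_k^{\mathrm S}-\eps)^n$ at the outset; then distinctness of the $m_{S_j}$ together with the cosunflower condition yields a contradiction, giving LCM-$k$-freeness.

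Putting the pieces together: with $r_{\min}\le n$ and $|\mathcal{F}|\ge(\mu_k^{\mathrm S}-\eps)^n$,
\[
f_k(N)\;\ge\;(\mu_k^{\mathrm S}-\eps)^n\Bigl(\tfrac{c}{n}\log\log M\Bigr)^{r_{\min}}.
\]
The ratio between the useful exponent and $\log\log N\approx\log\log M$ is governed by the term $(\mu_k^{\mathrm S}-\eps)^n$, which relative to $(\log N) = \exp(n\log\log M + \cdots)$ contributes an exponent $\log(\mu_k^{\mathrm S}-\eps)$ after taking $\log$ of both sides and dividing by $\log\log N$; letting $n\to\infty$ (and then $\eps\to0$) yields $f_k(N)\ge(\log N)^{\log\mu_k^{\mathrm S}-o(1)}$. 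The main obstacle is the bookkeeping that the $(\log\log M)^{r_{\min}}$ and $1/n^{r_{\min}}$ factors do not damage the exponent: we must ensure $r_{\min}$ can be taken small enough (e.g. by restricting $\mathcal{F}$ to a slice $\binom{[n]}{r}$ of near-maximal size, as in the tensor-power footnote, which costs only a factor $n+1$ and controls $|S|=r$), so that $r\log\log M = o\!\bigl(n\log\log M\bigr)$ is \emph{not} what we need — rather we need $r = \Theta(n)$ to be harmless, which it is since $(\frac{c}{n}\log\log M)^{r}$ with $r\le n$ only multiplies $\log N$ by a subpolynomial factor in $\log N$ once $M$ is large compared to $n$. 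Making this quantitative — choosing $M$ as a function of $n$ so that $\log\log M \gg n$, hence $\log N = M^n$-scale and the $n$-dependent losses are swallowed — is the one delicate calculation, and it is routine.
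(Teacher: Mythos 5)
Your approach is essentially the same as the paper's: encode a large $k$-cosunflower-free family over $n$ indices by squarefree integers via prime bucketing, observe LCM-$k$-freeness, and compute the harmonic sum. The combinatorial core is correctly identified (the prime supports must form a cosunflower; within each block, conflicting prime choices are ruled out by the pairwise-union characterization); the paper packages this cleanly in the blow-up lemma, Proposition~\ref{prop:blowup-ok}.

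However, the parameter choice you defer to as ``routine'' is actually the quantitative heart of the argument, and as stated your scaling is wrong, not merely unquantified. You ask for $\log\log M \gg n$ so that each block has harmonic sum $\ge 1$. But with $N = M^n$ and $L := \log\log N = \log\log M + O(\log n)$, the harmonic-sum lower bound $\ge |\mathcal F| \ge \lambda^n$ with $\lambda < \mu_k^{\mathrm S}$ translates into an exponent $n\log\lambda / L$ for $\log N$. If $\log\log M / n \to \infty$, then $n/L \to 0$ and the exponent collapses to $0$, not $\log\mu_k^{\mathrm S}$. In the other direction, if $n$ is too large relative to $\log\log M$ then each block's harmonic sum drops below $1$, and the factor $\bigl(c\log\log M/n\bigr)^{|S|}$ with $|S|$ as large as $\Theta(n)$ costs a genuine power of $\log N$ (not a subpolynomial factor, as you assert), which can wipe out the gain from $\lambda^n$. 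The only choice that works is the tight balance $n = L\bigl(1 - o(1)\bigr)$, with per-block harmonic sum barely exceeding $1$. The paper implements this by taking $t := \lfloor L - 2\log L\rfloor$ blocks of primes in $(L^2, N^{1/t}]$, greedily filled to harmonic sum in $[1, 1+1/L)$; a Mertens computation shows this range has just enough harmonic mass. You should carry out this balancing explicitly, since missing the right scaling by more than a $1+o(1)$ factor in either direction destroys the theorem.
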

For $k=3$, the best current upper bound due to Naslund and Sawin~\cite[Theorem~1]{NaSa17} states that
\[
\mu_3^{\mathrm S}\le \frac{3}{2^{2/3}}=1.889881574\dots\, .
\]
In the opposite direction, a construction of Deuber, Erd\H{o}s, Gunderson, Kostochka and
Meyer~\cite{DEGKM97} yields \(\mu_3^{\mathrm S}>1.551\). Hence, by Theorems~\ref{thm:B-implies-A} and~\ref{thm:mu-lb-implies-f-lb}, we obtain the following consequence.
\begin{cor}\label{cor:f3-upper}
As $N\to\infty$,
\[
(\log N)^{\log(1.551) - o(1)} \leq f_3(N)\ll (\log N)^{\frac{3}{2^{2/3}}-1+o(1)}.
\]
\end{cor}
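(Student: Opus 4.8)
The plan is to derive Corollary~\ref{cor:f3-upper} by simply specializing Theorems~\ref{thm:B-implies-A} and~\ref{thm:mu-lb-implies-f-lb} to the case $k=3$ and inserting the sharpest available numerical bounds on the sunflower-free capacity $\mu_3^{\mathrm S}$. No new combinatorial or analytic input is required beyond those two theorems together with two external facts: the Naslund--Sawin upper bound $\mu_3^{\mathrm S}\le 3/2^{2/3}$ from~\cite[Theorem~1]{NaSa17}, and the lower bound $\mu_3^{\mathrm S}>1.551$ coming from the construction of Deuber, Erd\H{o}s, Gunderson, Kostochka and Meyer~\cite{DEGKM97}.

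For the upper bound I would fix $k=3$ in Theorem~\ref{thm:B-implies-A}, which gives $f_3(N)\ll (\log N)^{\mu_3^{\mathrm S}-1+o(1)}$ as $N\to\infty$. Since for $N$ large the map $t\mapsto (\log N)^{t}$ is increasing and the implied constant is absolute (only $k=3$ is involved), one may replace $\mu_3^{\mathrm S}$ by any upper bound for it; using $\mu_3^{\mathrm S}\le 3/2^{2/3}$ yields $f_3(N)\ll (\log N)^{3/2^{2/3}-1+o(1)}$, which is the right-hand inequality. The only point requiring a line of care is that the error term remains $o(1)$ after this substitution, so enlarging the exponent only weakens the bound in the correct direction.

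For the lower bound I would fix $k=3$ in Theorem~\ref{thm:mu-lb-implies-f-lb}, which gives $f_3(N)\ge (\log N)^{\log\mu_3^{\mathrm S}-o(1)}$. Because $\log$ is monotone and, for $\log N>1$, the map $t\mapsto (\log N)^{t}$ is monotone, it suffices to have a concrete lower bound for $\mu_3^{\mathrm S}$. Here I would recall that, by~\eqref{eq:lim}, $\mu_3^{\mathrm S}=\lim_{n\to\infty}F_3(n)^{1/n}$, so a single sunflower-free family on $[n_0]$ of size $c^{n_0}$ with $c>1.551$ tensors up (via the argument in the footnote) to certify $\mu_3^{\mathrm S}\ge c>1.551$; the construction of~\cite{DEGKM97} supplies exactly such a $c$. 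Substituting $\mu_3^{\mathrm S}>1.551$ gives $f_3(N)\ge (\log N)^{\log(1.551)-o(1)}$, which is the left-hand inequality.

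Since both cited theorems are already phrased ``as $N\to\infty$'', the combined two-sided estimate holds in that regime, and there is no genuine obstacle: the subtleties are purely bookkeeping (monotonicity of the exponent, absoluteness of the implied constant for fixed $k=3$, and the harmlessness of the $o(1)$ terms under these monotone substitutions). For orientation one may note $\log(1.551)=0.4388\ldots$ and $3/2^{2/3}-1=0.8898\ldots$, so the displayed bounds leave a genuine gap that mirrors precisely the current gap between the known bounds on $\mu_3^{\mathrm S}$.
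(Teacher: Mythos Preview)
Your proposal is correct and matches the paper's own derivation: the corollary is obtained precisely by specializing Theorems~\ref{thm:B-implies-A} and~\ref{thm:mu-lb-implies-f-lb} to $k=3$ and inserting the Naslund--Sawin bound $\mu_3^{\mathrm S}\le 3/2^{2/3}$ and the Deuber--Erd\H{o}s--Gunderson--Kostochka--Meyer bound $\mu_3^{\mathrm S}>1.551$. The monotonicity bookkeeping you spell out is exactly what is needed, and the paper treats the corollary as an immediate consequence in the same way.
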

Unfortunately, Theorem~\ref{thm:mu-lb-implies-f-lb} never gives an exponent greater than $\log 2 < 0.7$, so it is strictly worse than Theorem~\ref{thm:lower} for $k \geq 7$. We leave it as an open question to obtain the correct exponent.
\begin{prob}
    In terms of $\mu_k^{\mathrm S}$, determine the smallest $c > 0$ such that for all sufficiently large $N$, we have
    $$f_k(N) \leq (\log N)^{c + o(1)}.$$
\end{prob}

The rest of the paper is organized as follows.
In Section~\ref{sec:preliminaries} we fix notation and record two analytic inputs.
In Section~\ref{sec:lower} we prove our unconditional lower bound
(Theorem~\ref{thm:lower}). In Section~\ref{sec:upper} we prove the capacity-based upper bound (Theorem~\ref{thm:B-implies-A}).
In Section~\ref{sec:equivalence} we prove the full-density equivalence in
Theorem~\ref{thm:equivalence}. In Section~\ref{sec:lower-bound-via-capacity} we establish the capacity-based lower bound Theorem~\ref{thm:mu-lb-implies-f-lb}.
Finally, the appendix contains the proof of the technical Lemma~\ref{lem:harmonic-SS}.

\section*{Acknowledgements}
The authors would like to express their sincere gratitude to Thomas Bloom and Terence Tao
for valuable comments posted on the Erd\H{o}s Problems website, which inspired the present work.
They are also grateful to Thomas Bloom for founding and maintaining the Erd\H{o}s Problems website.

\section{Preliminaries}\label{sec:preliminaries}

\subsection{Notation and asymptotic conventions}

Throughout the paper, $\log$ denotes the natural logarithm. For a positive integer \(n\), let \(\omega(n)\) denote the number of distinct prime factors of \(n\),
and let \(\Omega(n)\) denote the number of prime factors of \(n\) counted with multiplicity. For a condition $E$ (e.g.\ $i\in F$), we write $\mathbf{1}_{E}$ for the indicator of $E$,
namely $\mathbf{1}_{E}=1$ if $E$ holds and $\mathbf{1}_{E}=0$ otherwise.

We use Vinogradov's asymptotic notation. For functions \(f=f(n)\) and \(g=g(n)\), we write
\(f=O(g)\), \(g=\Omega(f)\), \(f\ll g\), or \(g\gg f\) to mean that there exists a constant \(C>0\) such that
\(|f(n)|\le C g(n)\) for all sufficiently large \(n\).
We write \(f\asymp g\) or \(f=\Theta(g)\) to mean that \(f\ll g\) and \(g\ll f\), and we write
\(f=o(g)\) to mean that \(f(n)/g(n)\to 0\) as \(n\to\infty\).
If the implicit constant is allowed to depend on one or more parameters $z_1,\dots,z_r$, we indicate this by writing $f\ll_{z_1,\dots,z_r} g$, $g\gg_{z_1,\dots,z_r} f$, $f=O_{z_1,\dots,z_r}(g)$ or $g=\Omega_{z_1,\dots,z_r}(f)$.

\subsection{Auxiliary analytic estimates}
In this subsection we record two standard analytic inputs that will be used in the proof of
Theorem~\ref{thm:B-implies-A}.

\begin{lem}\label{lem:lemma22}
Let $z>0$ be a fixed real number. Then for all $X\ge 2$ we have
\[
\sum_{m\le X}\frac{z^{\omega(m)}}{m}
\ll_z
(\log X)^z.
\]
\end{lem}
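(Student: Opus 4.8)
The plan is to dominate the sum by an Euler product. Since $m\mapsto z^{\omega(m)}$ is a nonnegative multiplicative function and every $m\le X$ is composed solely of primes $\le X$, dropping the constraint $m\le X$ (while keeping only the constraint that all prime factors lie below $X$) can only enlarge the sum. Using $z^{\omega(p^a)}=z$ for every $a\ge 1$ to read off the local factors and summing the resulting geometric series, one obtains
\[
\sum_{m\le X}\frac{z^{\omega(m)}}{m}\ \le\ \prod_{p\le X}\Bigl(1+\frac{z}{p}+\frac{z}{p^2}+\cdots\Bigr)\ =\ \prod_{p\le X}\Bigl(1+\frac{z}{p-1}\Bigr),
\]
so it suffices to show this last product is $\ll_z(\log X)^z$.

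Next I would take logarithms and linearize. Since $\log(1+t)\le t$ for every $t\ge 0$,
\[
\log\prod_{p\le X}\Bigl(1+\frac{z}{p-1}\Bigr)\ =\ \sum_{p\le X}\log\Bigl(1+\frac{z}{p-1}\Bigr)\ \le\ z\sum_{p\le X}\frac{1}{p-1}.
\]
Now apply Mertens' theorem $\sum_{p\le X}1/p=\log\log X+O(1)$, together with the convergence of $\sum_p\bigl(\tfrac1{p-1}-\tfrac1p\bigr)=\sum_p\tfrac1{p(p-1)}$, to conclude $\sum_{p\le X}\tfrac1{p-1}=\log\log X+O(1)$. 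Exponentiating gives $\prod_{p\le X}(1+\tfrac{z}{p-1})\ll_z(\log X)^z$ for all large $X$; and for $X$ in any bounded range $[2,X_0]$ the left-hand side of the lemma is a finite quantity bounded in terms of $z$, while $(\log X)^z\ge(\log 2)^z>0$, so the stated inequality holds for all $X\ge 2$ after adjusting the $z$-dependent implied constant.

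I do not expect a genuine obstacle here. The two points meriting a line of care are: (i) the legitimacy of the Euler-product domination, which rests on the positivity of all summands and on the fact that $m\le X$ forces every prime factor of $m$ to be $\le X$; and (ii) the fact that the estimate is uniform in $X$ but not in $z$ — this is consistent with the lemma, which only asserts a $z$-dependent constant, and with the use of $\log(1+t)\le t$, which is valid for all $t\ge 0$ regardless of how large $z$ is. (If one instead wanted the matching lower bound $\gg_z(\log X)^z$, a Selberg--Delange or Wirsing-type argument would be required, but only the upper bound is needed in the proof of Theorem~\ref{thm:B-implies-A}.)
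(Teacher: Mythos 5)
Your proof is correct and follows essentially the same route as the paper's: dominate by the Euler product over primes $\le X$ using positivity and the fact that every $m\le X$ has all prime factors $\le X$, compute the local factors to get $\prod_{p\le X}(1+\tfrac{z}{p-1})$, take logarithms with $\log(1+t)\le t$, and invoke Mertens. The only difference is that you spell out the reduction $\sum_{p\le X}\tfrac{1}{p-1}=\log\log X+O(1)$ via convergence of $\sum_p \tfrac{1}{p(p-1)}$ and add a remark about small $X$, while the paper cites the estimate directly; these are cosmetic.
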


\begin{proof}
Fix $X\ge 2$ and a real number $z>0$. Since every integer $m\le X$ has all its prime divisors $\le X$, we have the crude but useful inequality
\[
\sum_{m\le X}\frac{z^{\omega(m)}}{m}
\le
\sum_{\substack{m\ge 1\\ \text{all prime divisors of }m\text{ are }\le X}}
\frac{z^{\omega(m)}}{m}
=: S(X).
\]Because the arithmetic function $n\mapsto z^{\omega(n)}$ is multiplicative, we can expand $S(X)$ as a finite Euler product over primes $p\le X$:
\[
S(X)
= \prod_{p\le X}\left(\sum_{k=0}^\infty\frac{z^{\omega(p^k)}}{p^k}\right)= \prod_{p\le X}\left(1 + \sum_{k \ge 1} \frac{z}{p^k}\right)= \prod_{p\le X}\left(1 + \frac{z}{p-1}\right).        
\]Taking logarithms and using $\log(1+u) \le u$,
\[
\log S(X)
= \sum_{p\le X} \log \left(1 + \frac{z}{p-1}\right) \leq z\sum_{p\le X} \frac{1}{p-1}= z\log\log X + O(z),
\]where we used the standard estimate $\sum_{p\le X} (p-1)^{-1} = \log\log X + O(1)$. Thus
$S(X) \ll_z (\log X)^z$, and the claim follows.
\end{proof}

\begin{lem}\label{lem:harmonic-SS}
Let
\[
H_\ell(N) := \sum_{\substack{n\le N\\ \omega(n)=\Omega(n)=\ell}} \frac1n.
\]Then for every $0<\eta<1$ we have
\begin{equation}\label{eq:H-ell-lower}
  H_\ell(N) \gg_\eta \frac{(\log\log N)^\ell}{\ell!}
  \qquad (N\to\infty,\ 1\le \ell\le (1-\eta)\log\log N).
\end{equation}
\end{lem}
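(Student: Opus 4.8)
The plan is to lower-bound $H_\ell(N)$ by restricting to squarefree integers $n = p_1 p_2 \cdots p_\ell$ with *all* prime factors confined to a dyadic-type range where the primes are small enough that products stay below $N$, yet numerous enough that the harmonic sum of primes in the range is comparable to $\log\log N$. Concretely, fix a parameter $y = y(N)$ (to be chosen, e.g. $y = (\log N)^{c}$ for a small constant $c = c(\eta) > 0$, or $y = \exp((\log N)^{1-\eta/2})$), and restrict to primes $p \le y$. Any squarefree product of $\ell$ distinct such primes is at most $y^\ell$, and since $\ell \le (1-\eta)\log\log N$ we can arrange $y^\ell \le N$ provided $\log y \le \frac{\log N}{(1-\eta)\log\log N}$; choosing $\log y \asymp_\eta \frac{\log N}{\log\log N}$ makes this an equality up to constants, and then by Mertens $\sum_{p \le y} \frac1p = \log\log y + O(1) = \log\log N + O_\eta(1)$, so no logarithmic factor is lost.

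The main estimate is then
\begin{equation}\label{eq:SS-main-step}
H_\ell(N) \;\ge\; \sum_{\substack{p_1 < \cdots < p_\ell \le y}} \frac{1}{p_1 \cdots p_\ell}
\;=\; \frac{1}{\ell!}\Biggl( \Biggl(\sum_{p \le y}\frac1p\Biggr)^\ell - (\text{diagonal terms})\Biggr),
\end{equation}
where the symmetrization of the product of sums over the simplex of strictly increasing tuples costs a $1/\ell!$ and an inclusion–exclusion correction for tuples with a repeated prime. The correction is controlled by the standard bound $\sum_{p} p^{-2} \le C$: the terms in $(\sum_{p\le y} 1/p)^\ell$ with at least one coincidence contribute at most $\binom{\ell}{2} \bigl(\sum_p p^{-2}\bigr)\bigl(\sum_{p \le y} 1/p\bigr)^{\ell-2} \le C\ell^2 (\log\log N + O_\eta(1))^{\ell-2}$, which is a factor $O(\ell^2 / (\log\log N)^2)$ smaller than the main term; since $\ell \le (1-\eta)\log\log N$ this is at most a bounded multiple — actually a $(1 - \Omega_\eta(1))$ fraction is what we need, and here the $\eta$ enters to keep $\ell^2/(\log\log N)^2$ bounded away from the regime where the correction swamps the main term. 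Combining, $H_\ell(N) \gg_\eta \frac{1}{\ell!}(\log\log N)^\ell$ after absorbing the $O_\eta(1)$ shift in Mertens into the implied constant (using again $\ell \le (1-\eta)\log\log N$ so that $(\log\log N + O_\eta(1))^\ell \gg_\eta (\log\log N)^\ell$, since $(1 + O_\eta(1)/\log\log N)^\ell = \exp(O_\eta(\ell/\log\log N)) = \exp(O_\eta(1))$).

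The step I expect to be the main obstacle — really the only delicate point — is the bookkeeping that shows the $O_\eta(1)$ additive error in Mertens' estimate, once raised to the $\ell$-th power, does not degrade the bound: this is exactly why the hypothesis $\ell \le (1-\eta)\log\log N$ (rather than merely $\ell = O(\log\log N)$ or $\ell \le \log\log N$) is needed, since it guarantees $\ell \log\bigl(1 + C_\eta/\log\log N\bigr)$ stays bounded. A secondary point is verifying that $y$ can be chosen simultaneously large enough for the Mertens sum to reach $\log\log N$ and small enough for $y^\ell \le N$; the slack between $\ell$ and $\log\log N$ afforded by $\eta$ makes both constraints compatible, with room to spare. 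Everything else is a routine Euler-product / inclusion–exclusion computation, and no sieve or Sathe–Selberg machinery is actually required for the lower bound — the name in the lemma refers to the shape of the answer, not the method. (One may alternatively cite a Sathe–Selberg asymptotic for $\pi_\ell(N) = \#\{n \le N : \omega(n) = \Omega(n) = \ell\}$ and partial summation, but the elementary argument above is cleaner and self-contained, which is why the detailed version is deferred to the appendix.)
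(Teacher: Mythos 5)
There is a genuine gap, and it lies exactly where you flag the ``only delicate point''; the delicate point is in fact fatal as written. If you require every prime factor to satisfy $p\le y$ with $y^\ell\le N$, then the tightest admissible choice is $\log y=\frac{\log N}{\ell}$, which gives
\[
\sum_{p\le y}\frac1p \;=\; \log\log y + O(1) \;=\; \log\log N - \log\ell + O(1),
\]
\emph{not} $\log\log N + O_\eta(1)$. For $\ell$ near the top of the allowed range, $\ell\asymp(1-\eta)\log\log N$, the deficit is $\log\ell\asymp\log\log\log N$, which is unbounded. Your concrete suggestions fare no better: $\log y\asymp_\eta \log N/\log\log N$ gives the same $-\log\log\log N$ deficit, and $y=\exp((\log N)^{1-\eta/2})$ gives $\sum_{p\le y}1/p\approx(1-\eta/2)\log\log N$, an even larger (linear) deficit. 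Raising the base to the $\ell$-th power, the loss is
\[
\left(1-\frac{\log\log\log N + O_\eta(1)}{\log\log N}\right)^{\ell}
\;\asymp_\eta\; (\log\log N)^{-(1-\eta)+o(1)}
\]
when $\ell\asymp(1-\eta)\log\log N$, which tends to $0$. So your argument only proves $H_\ell(N)\gg_\eta \frac{(\log\log N)^\ell}{\ell!}\cdot(\log\log N)^{-(1-\eta)+o(1)}$, not the claimed bound. The restriction to products of $\ell$ \emph{small} primes is genuinely lossy: a typical squarefree $\ell$-almost prime $n\le N$ has one prime factor of size comparable to $n^{1/O(1)}$, and these $n$ are simply missing from your sum.

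For what it is worth, the weakened bound you actually prove, with a harmless extra $(\log\log N)^{O(1)}=(\log N)^{o(1)}$ loss, would still be adequate for the downstream use of the lemma in the proof of Theorem~\ref{thm:B-implies-A}, since that loss is absorbed into the $o(1)$ in the exponent. But the lemma as stated is not established by your argument. The paper's proof goes via partial summation, writing $H_\ell(N)\ge\int_2^N A_\ell(t)\,t^{-2}\,dt$ with $A_\ell(t)=\#\{n\le t:\omega(n)=\Omega(n)=\ell\}$, and then invokes the squarefree Sathe--Selberg asymptotic for $A_\ell(t)$ on the range $\log\log t\in[\tfrac12\log\log N,\log\log N]$. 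Because $A_\ell(t)$ counts \emph{all} squarefree $\ell$-almost primes up to $t$, including those with a large prime factor, no $\log\log\log N$ is surrendered, and the substitution $u=\log\log t$ produces the clean $\int u^{\ell-1}\,du/(\ell-1)!\asymp L^\ell/\ell!$. So, contrary to your closing remark, the Sathe--Selberg input (or some substitute that sees almost primes with a large prime factor) is doing real work here and is not merely naming the shape of the answer.
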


\begin{proof}
See Appendix~\ref{appendix:SSS}.
\end{proof}

\section{An unconditional lower bound}
\label{sec:lower}

We start from a simple combinatorial lemma.

\begin{lem}\label{lem:r=k-2-union}
Let $k\ge 3$. If $S_1,\dots,S_k$ are $(k - 2)$-element sets with the same pairwise union, then $S_1=\cdots=S_k$.
\end{lem}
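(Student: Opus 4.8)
The plan is to turn the hypothesis into an incidence count between the common union and the sets $S_i$. First I would set $U:=S_1\cup S_2$; by hypothesis $S_i\cup S_j=U$ for every pair $i\neq j$, and since $k\ge 3$ every index $i$ occurs in at least one such pair, so $S_i\subseteq U$ for all $i$. In particular $|U|\ge|S_1|=k-2$.

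The key observation is that every element $x\in U$ lies in all but at most one of the sets $S_1,\dots,S_k$: if $x$ were absent from two of them, say $x\notin S_i$ and $x\notin S_j$ with $i\neq j$, then $x\notin S_i\cup S_j=U$, a contradiction. Counting the incidences $(x,i)$ with $x\in S_i\subseteq U$ in two ways then gives
\[
k(k-2)=\sum_{i=1}^{k}|S_i|=\sum_{x\in U}\#\{\, i : x\in S_i \,\}\ \ge\ (k-1)\,|U|,
\]
so $|U|\le k(k-2)/(k-1)<k-1$, and hence $|U|\le k-2$ since $|U|$ is an integer. Together with $|U|\ge k-2$ this forces $|U|=k-2$, and as each $S_i\subseteq U$ has exactly $k-2$ elements we conclude $S_i=U$ for every $i$, i.e. $S_1=\cdots=S_k$.

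I do not expect a genuine obstacle here: the statement is elementary and the only idea is that membership in $U$ propagates to all but one of the $S_i$, which converts the equal-union hypothesis into the displayed inequality. The one point worth checking is the strict inequality $k(k-2)<(k-1)^2$, which is simply $0<1$; this is exactly what upgrades the rational bound $|U|\le k(k-2)/(k-1)$ to the integer bound $|U|\le k-2$ that closes the argument.
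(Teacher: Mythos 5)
Your proof is correct and is essentially the paper's argument in dual form: the paper introduces the complements $M_j := U\setminus S_j$, shows they are pairwise disjoint (equivalent to your observation that each $x\in U$ misses at most one $S_i$), and bounds $\sum|M_j|\le|U|$, which is algebraically the same inequality as your incidence count $\sum|S_i|\ge(k-1)|U|$. Both then extract $|U|\le k-2$ from the strict inequality $k(k-2)<(k-1)^2$ and conclude identically.
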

\begin{proof}
Let $U = \cup_{j = 1}^k S_j$. Write $u:=|U|$ and $r:=k-2$. For each $j$ set $M_j:=U\setminus S_j$.
Then $M_a\cap M_b=\varnothing$ for $a\ne b$, since otherwise an element missing
from both $S_a$ and $S_b$ would be missing from $S_a\cup S_b=U$.
Also $|M_j|=u-r$ for all $j$ because $|S_j|=r$.
Thus
\[
k(u-r)=\sum_{j=1}^k |M_j| = \left|\bigsqcup_{j=1}^k M_j\right|\le |U|=u,
\]
so $u\le \frac{kr}{k-1}=\frac{k(k-2)}{k-1}<k-1=r+1$. Hence $u\le r$.
But $u\ge r$ since each $S_j$ has size $r$ and lies in $U$. Therefore $u=r$,
so $u-r=0$ and every $M_j$ is empty. Thus $S_j=U$ for all $j$.
\end{proof}

\begin{proof}[Proof of Theorem~\ref{thm:lower}]
Write $L:=\log\log N$ and fix $k\ge 3$, $r:=k-2$.  Let
\[
\delta:=\delta(N):=L^{-1/2},
\qquad
t:=\left\lfloor \frac{(1-\delta)L}{B}\right\rfloor,
\qquad
x:=N^{1/(rt)},
\qquad
y:=\exp(L^{1/3}),
\]
where $B>1$ is a constant to be chosen later.  For $N$ large we have $t\ge 1$,
$y<x$, and $1/p\le 1/y\le \delta$ for every prime $p\ge y$.

By Mertens' theorem for primes, we have
\begin{equation}\label{eq:enough-total-sum}
\sum_{y<p\le x}\frac1p
=\log\log x-\log\log y+O(1) = L - \log(rt)-\frac{1}{3}\log L +O(1) \ge t(B+\delta)
\end{equation}for $N$ sufficiently large. 

List the primes in $[y,x]$ in increasing order, and form disjoint sets
$P_1,P_2,\dots$ greedily as follows: start with $P_1=\varnothing$ and keep
adding the next unused prime $p$ to the current set until its harmonic sum
exceeds $B$, then start a new set. Because each prime $p\in[y,x]$ satisfies $1/p\le \delta$, each completed set
$P_i$ satisfies
\begin{equation}\label{eq:bucket-sum-11}
B \le \sum_{p\in P_i}\frac1p < B+\delta.
\end{equation}
So \eqref{eq:enough-total-sum} implies that this process produces at
least $t$ completed sets $P_1,\dots,P_t$.

Define
\[
A_N
:=\left\{
n=\prod_{i=1}^t\ \prod_{p\in S_i} p
\ :\ S_i\subseteq P_i,\ |S_i|=r\ \text{for all }i
\right\}.
\]
Every $n\in A_N$ is squarefree and satisfies $n\le x^{rt}=N$, hence
$A_N\subseteq\{1,2,\dots,N\}$.

We claim that $A_N$ is LCM-$k$-free. Suppose for contradiction that
$n_1,\dots,n_k\in A_N$ is an LCM-$k$-tuple. Write
\[
n_j=\prod_{i=1}^t \Bigl(\prod_{p\in S_i^{(j)}}p\Bigr),
\qquad
S_i^{(j)}\subseteq P_i,\ \ |S_i^{(j)}|=r.
\]
Since the prime sets $P_i$ are disjoint across different $i$, equality of the
pairwise $\lcm$'s implies that for each fixed $i$ the unions
$S_i^{(a)}\cup S_i^{(b)}$ are independent of the pair $(a,b)$. Let $U_i$ denote
this common union. Applying Lemma~\ref{lem:r=k-2-union} in each bucket $i$ yields
$S_i^{(1)}=\cdots=S_i^{(k)}$ for every $1\le i\le t$, hence $n_1=\cdots=n_k$,
contradicting that the $n_j$ are distinct. Therefore $A_N$ is LCM-$k$-free.

For each $i$ define
\[
E_i:=\sum_{\substack{S\subseteq P_i\\ |S|=r}}\ \frac{1}{\prod_{p\in S}p}.
\]
Then by multiplicativity across buckets, we have \(\sum_{n\in A_N}\frac1n=\prod_{i=1}^t E_i\). Write $J_i:=\sum_{p\in P_i}1/p$, so by
\eqref{eq:bucket-sum-11} we have $B\le J_i<B+\delta$ and $\max_{p\in P_i}1/p\le 1/y\le\delta$.
Expanding $J_i^r$ we have
\[
J_i^r = r! E_i + R_i,
\]
where $R_i$ is the contribution of $r$-tuples with at least one repeated index.
Choosing a repeated pair of positions and bounding the remaining factors by $J_i$
gives the crude but sufficient estimate
\[
R_i
\le
\sum_{1\le u<v\le r}\ \sum_{\substack{(p_1,\dots,p_r)\in P_i^r\\ p_u=p_v}}
\frac{1}{p_1\cdots p_r} \le \binom{r}{2}\left(\max_{p\in P_i}\frac{1}{p}\right)J_i^{r-1}
 \le \binom{r}{2}\delta(B+\delta)^{r-1}.
\]
Thus
\[
E_i \ge \frac{J_i^r}{r!}-\frac{\binom{r}{2}}{r!} \delta(B+\delta)^{r-1}  \ge \frac{B^r}{r!}\left(1-O_{r}(\delta)\right),
\]
uniformly in $1\le i\le t$ (here $r=k-2$ is fixed). Therefore we have
\[
\sum_{n\in A_N}\frac1n
=\prod_{i=1}^t E_i
\ \ge\
\left(\frac{B^r}{r!}\left(1-O_{r}(\delta)\right)\right)^t.
\]
Since $\delta=L^{-1/2}$ and $t=(1+o(1))L/B$, where $L=\log\log N$, it follows that
\[
\left(1-O_r(\delta)\right)^t=\exp \left(t \log\left(1-O_{r}(\delta)\right)\right)=\exp \left(-O_r(t\delta)\right)
=\exp (-O_r(L^{\frac{1}{2}}))=(\log N)^{-o(1)}.
\]Moreover,
\[
\left(\frac{B^r}{r!}\right)^t
=\exp \left(t(r\log B-\log(r!))\right)
=(\log N)^{\frac{r\log B-\log(r!)}{B}-o(1)}.
\]Combining these estimates yields
\[
\sum_{n\in A_N}\frac1n  \ge (\log N)^{\frac{r\log B-\log(r!)}{B}-o(1)}.
\]
Since $A_N$ is LCM-$k$-free, we conclude that
\[
f_k(N) \ge (\log N)^{\frac{r\log B-\log(r!)}{B}-o(1)}.
\]
Recall that $r = k -2$. Taking
$B=e (r!)^{1/r}$ gives
\[
f_k(N) \ge (\log N)^{r/(e(r!)^{1/r})-o(1)}=(\log N)^{c_k-o(1)}.
\]
This completes the proof.\end{proof}

\section{An upper bound via sunflower-free capacity}\label{sec:upper}

We now prove Theorem~\ref{thm:B-implies-A} by encoding LCM-$k$-free sets as sunflower-free families and applying a harmonic estimate for almost primes.

\begin{proof}[Proof of Theorem~\ref{thm:B-implies-A}]
Fix \(k\ge 3\), write \(\mu:=\mu_k^{\mathrm S}\) and let \(F_k(n)\) denote the size of the largest \(k\)-sunflower-free family $\mathcal{F}\subseteq 2^{[n]}$. Then for every $\delta>0$ there exists a constant $C_1=C_1(\delta)>0$ such that
\begin{equation}\label{eq:F3-capacity}
F_k(n)\le C_1 (\mu+\delta)^n\qquad\text{for all }n\ge 1.
\end{equation}

Fix a set $A\subseteq\{1,\dots,N\}$ which is LCM-$k$-free and write $\mathcal{P}(m)$ for the set of prime divisors of $m$. For integers $\ell,m \ge 1$, let $r_\ell(m)$ be the number of representations
\begin{equation}\label{eq:r-def}
m=a\prod_{p\in S} p,
\end{equation}
where $a\in A$ and $S\subseteq \mathcal P(m)$ is an $\ell$-element set of primes.
Equivalently, define
\[
\mathcal F_m:=\left\{S\subseteq \mathcal P(m): |S|=\ell,\ m=a\prod_{p\in S}p\ \text{for some }a\in A\right\}.
\]
Note that for fixed $m$ and $S$, the value of $a$ is uniquely determined, hence $r_\ell(m)=|\mathcal F_m|$.


\begin{claim}\label{claim112221}
For every $m$ the family $\mathcal{F}_m$ is $k$-sunflower-free.
\end{claim}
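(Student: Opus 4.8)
The plan is to argue by contradiction: assuming $\mathcal{F}_m$ contains a $k$-sunflower $S_1,\dots,S_k$ with kernel $K:=S_i\cap S_j$, I would manufacture an LCM-$k$-tuple inside $A$ and invoke the LCM-$k$-freeness of $A$. First I would record that the correspondence attaching to each $S\in\mathcal{F}_m$ its witness $a=m/\prod_{p\in S}p\in A$ is well defined (for fixed $m,S$ the integer $a$ is forced by \eqref{eq:r-def}) and injective: if $\ell$-element prime sets $S,S'$ produced the same $a$, then $\prod_{p\in S}p=\prod_{p\in S'}p$, so $S=S'$ by unique factorization. Hence the witnesses $a_1,\dots,a_k\in A$ attached to the distinct sets $S_1,\dots,S_k$ are themselves pairwise distinct.

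Next I would pass to the petals. Put $T_i:=S_i\setminus K$; since $S_i\cap S_j=K$ for $i\ne j$, the $T_i$ are pairwise disjoint and each of their primes lies outside $K$. Setting $d:=\prod_{p\in K}p$ and $M:=m/d$, the defining relation $m=a_i\prod_{p\in S_i}p=a_i\,d\prod_{p\in T_i}p$ rewrites as $a_i\prod_{p\in T_i}p=M$ for every $i$; in particular each $a_i$ divides $M$.

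Finally I would show $\lcm(a_i,a_j)=M$ for all $i<j$, which together with the distinctness of the $a_i$ exhibits them as an LCM-$k$-tuple in $A$ and yields the contradiction. Comparing $q$-adic valuations: $a_i\mid M$ gives $v_q(a_i)\le v_q(M)$ for every prime $q$, so $\max(v_q(a_i),v_q(a_j))<v_q(M)$ can happen only if $v_q(a_i)<v_q(M)$ and $v_q(a_j)<v_q(M)$; but $a_i\prod_{p\in T_i}p=M$ with $\prod_{p\in T_i}p$ squarefree means $v_q(a_i)<v_q(M)$ holds precisely when $q\in T_i$, and $q\in T_i\cap T_j$ is impossible by disjointness. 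Hence $v_q(\lcm(a_i,a_j))=v_q(M)$ for all $q$, i.e.\ $\lcm(a_i,a_j)=M$ independently of the pair. I do not anticipate a genuine obstacle here; the only point requiring care is that $m$ need not be squarefree, so the argument must run through prime valuations rather than bare divisibility, the essential structural input being the pairwise disjointness of the petals $T_i$ — exactly the phenomenon exploited across the disjoint buckets in the proof of Theorem~\ref{thm:lower} (one could alternatively invoke a combinatorial lemma in the spirit of Lemma~\ref{lem:r=k-2-union}, but the valuation computation is short enough to carry out directly).
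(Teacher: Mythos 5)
Your proof is correct and follows essentially the same route as the paper: extract the witnesses $a_i = m/\prod_{p\in S_i}p$, observe they are distinct, and show the pairwise lcm equals $m/\prod_{p\in K}p$ using disjointness of the petals. The only stylistic difference is that the paper invokes the identity $\lcm(a_i,a_j) = m/\gcd(m/a_i,\, m/a_j)$ directly, whereas you carry out the equivalent $q$-adic valuation check by hand; both are fine, and your explicit justification of why the $a_i$ are pairwise distinct fills in a point the paper leaves implicit.
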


\begin{proof}[Proof of Claim~\ref{claim112221}]
Suppose, for a contradiction, that $\mathcal{F}_m$ contains a $k$-sunflower
$S_1,\dots,S_k$ with common kernel $K$ and pairwise disjoint petals
$P_i := S_i\setminus K$. For each $i$, there exists distinct $a_i\in A$ such that
\[
m = a_i\prod_{p\in S_i} p.
\]
Since $S_i\cap S_j = K$ for all $1\le i<j\le k$, we deduce
\[
\operatorname{lcm}(a_i,a_j)
= \frac{m}{\gcd(m / a_i, m / a_j)}
= \frac{m}{\prod_{p\in K} p}
\quad\text{for all }1\le i<j\le k.
\]
Thus the $k$ elements $a_1,\dots,a_k\in A$  are distinct and have the same pairwise lcm. This contradicts the assumption that $A$ is LCM-$k$-free. 
\end{proof}

The family $\mathcal{F}_m$ lives on the ground set $\mathcal{P}(m)$ of size
$\omega(m)$. Therefore, by Claim~\ref{claim112221} and the bound
\eqref{eq:F3-capacity}, for any fixed $\delta>0$ we have
\begin{equation}\label{eq:r-ell-m-bound}
  r_\ell(m)
  = |\mathcal{F}_m|
  \le F_k\bigl(\omega(m)\bigr)
  \le C_1(\delta) \bigl(\mu+\delta\bigr)^{\omega(m)}.
\end{equation}

Fix $\ell\ge 1$ and set
\[
H_\ell(N) := \sum_{\substack{n\le N\\\omega(n)=\Omega(n)=\ell}}\frac1n.
\]
Consider the double sum
\[
T := \biggl(\sum_{a\in A}\frac1a\biggr) H_\ell(N) = \sum_{a\in A}\frac1a
     \sum_{\substack{n\le N\\\omega(n)=\Omega(n)=\ell}}\frac1n.
\]
Put $m:=an$. Then $m\le N^2$. Each such pair $(a,n)$ produces a representation~\eqref{eq:r-def} of $m$
(with the prime factors of $n$ as the distinct primes $p_1,\dots,p_\ell$), hence contributes to $r_\ell(m)$.
Consequently,
\[
T = \sum_{a\in A}\sum_{\substack{n\le N\\\omega(n)=\Omega(n)=\ell}}\frac1{an}
  \leq \sum_{m\le N^2}\frac{r_\ell(m)}{m}.
\]
Using \eqref{eq:r-ell-m-bound}, we get
\[
T  \le C_1(\delta)\sum_{m\le N^2}
\frac{(\mu+\delta)^{\omega(m)}}{m}.
\]Taking $z=\mu+\delta$ and $X=N^2$ in Lemma~\ref{lem:lemma22}, we obtain
\[
 C_1(\delta)\sum_{m\le N^2}
\frac{(\mu+\delta)^{\omega(m)}}{m} \ll_{\delta} (\log N+\log 2)^{\mu+\delta}\ll_{\delta} (\log N)^{\mu+\delta}.
\]
This means
\begin{equation}\label{eq:master-ineq}
 \biggl(\sum_{a\in A}\frac1a\biggr) H_\ell(N) \ll_{\delta} (\log N)^{\mu+\delta}.
\end{equation}Let $L := \log\log N$. Let $\eta\in(0,1)$ be fixed for the moment, and restrict to integers $\ell$ with $1\le \ell\le (1-\eta)L$.
By Lemma~\ref{lem:harmonic-SS}, $H_\ell(N)\gg_\eta L^\ell/\ell!$, so \eqref{eq:master-ineq} implies
\begin{equation}\label{eq:sumA-basic}
\sum_{a\in A}\frac1a \ll_{\eta,\delta} (\log N)^{\mu+\delta} \frac{\ell!}{L^\ell}.
\end{equation}

We now choose $\ell$ as a function of $N$ so as to optimise the decay of the
factor $\ell!/L^\ell$. Choose $\ell:=\lfloor(1-\eta)L\rfloor$. By Stirling's formula,
\[
\ell! = \sqrt{2 \pi \ell}\left(\frac{\ell}{e}\right)^\ell \left(1+O\left(\frac{1}{\ell}\right)\right),
\]and hence
\begin{equation}\label{eq:ell-over-L1}
\frac{\ell!}{L^\ell}
= \exp \Bigl(\ell(\log\ell -1 -\log L)+O(\log\ell)\Bigr).
\end{equation}
Write $\lambda:=\ell/L$. Then $\lambda=1-\eta+O(1/L)$ and
$\log\ell = \log L + \log\lambda$. Therefore
\begin{equation}\label{eq:ell-over-L2}
\ell(\log\ell -1 -\log L)
= \lambda L(\log\lambda -1)
= (1-\eta)L\bigl(\log(1-\eta)-1\bigr) + o(L).
\end{equation}Plugging \eqref{eq:ell-over-L1} and \eqref{eq:ell-over-L2} into \eqref{eq:sumA-basic} gives
\[
\sum_{a\in A}\frac1a
 \ll_{\eta,\delta} 
(\log N)^{\mu+\delta}
\exp\Bigl( (1-\eta)\left(\log(1-\eta)-1\right)L + o(L)\Bigr).
\]
Recalling that $L=\log\log N$, this may be rewritten as
\begin{equation}\label{eq:ell-over-L6}
\sum_{a\in A}\frac1a
 \ll_{\eta,\delta}
(\log N)^{\Psi(\eta,\delta)+o(1)},
\end{equation}
where \(\Psi(\eta,\delta) := \mu+\delta + (1-\eta)\bigl(\log(1-\eta)-1\bigr)\).

We now analyse $\Psi(\eta,\delta)$. For fixed $\delta>0$, the function $\eta\mapsto\Psi(\eta,\delta)$ is continuous
on $[0,1)$, and
\[
\lim_{\eta\to 0^+}\Psi(\eta,\delta) = \mu+\delta-1.
\]Now let $\varepsilon>0$ be arbitrary. First choose $\delta =\varepsilon/2$. By continuity of $\Psi(\cdot,\delta)$ at $\eta=0$,
there exists $\eta_0=\eta_0(\varepsilon)>0$ such that for all
$0<\eta\le\eta_0$ we have
\[
\Psi(\eta,\delta) \le \mu+\delta-1 + \varepsilon/2.
\]
For such $\eta$, \(\Psi(\eta,\delta)
\le \mu -1 + \delta + \varepsilon/2
\le \mu -1 + \varepsilon\). Recalling~\eqref{eq:ell-over-L6}, we conclude that for every $\varepsilon>0$, we have
\[
\sum_{a\in A}\frac1a
  \ll_\varepsilon(\log N)^{\mu-1+\varepsilon}.
\]
Since $A\subseteq\{1,\dots,N\}$ was an arbitrary LCM-$k$-free set, it follows that
\[
f_k(N) \ll_\varepsilon (\log N)^{\mu-1+\varepsilon}.
\]This completes the proof.
\end{proof}

\section{Equivalence at full density}\label{sec:equivalence}

In this section we prove Theorem~\ref{thm:equivalence}.
The direction $f_k(N)=(\log N)^{1-o(1)} \Rightarrow \mu_k^{\mathrm S} = 2$ is immediate from
Theorem~\ref{thm:B-implies-A} by comparing exponents.
The reverse direction is more involved. 


Assume $\mu_k^{\mathrm S} = 2$. We first obtain large uniform
$k$-cosunflower-free families at an arbitrary fixed density via a tensor/padding argument of
Alon--Shpilka--Umans~\cite{AAC12}.
We then upgrade this to a weighted statement under a product measure using a different ``blow-up construction'',
and finally encode the resulting cosunflower-free family by squarefree integers to produce a
LCM-$k$-free set with near-maximal harmonic sum.

\subsection{Blow-ups and product measures}

Within a fixed ground set, taking complements swaps $k$-sunflowers and $k$-cosunflowers:
for $S_1,\dots,S_k\subseteq U$ we have
\[
S_i\cup S_j\ \text{is constant for all }i<j
\quad\Longleftrightarrow\quad
(U\setminus S_i)\cap(U\setminus S_j)\ \text{is constant for all }i<j.
\]
We will work with cosunflowers. The following property is easy to check.
\begin{prop}\label{prop:pairwise-union-characterization}
Let $k\ge 3$ and let $S_1,\dots,S_k$ be sets.
The pairwise unions $S_i\cup S_j$ are independent of the pair $1\le i<j\le k$
if and only if every element belongs to either $0$ or at least $k-1$ of the sets $S_1,\dots,S_k$. 
\end{prop}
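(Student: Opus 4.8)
The plan is to prove the equivalence pointwise, by tracking, for each element $x$, the index set $I_x := \{\,i\in[k] : x\in S_i\,\}$ of sets containing it. The observation driving everything is that $x\in S_i\cup S_j$ if and only if $\{i,j\}\cap I_x\neq\varnothing$; hence the hypothesis ``$S_i\cup S_j$ is independent of the pair $i<j$'' is equivalent, after intersecting each pairwise union with the singleton $\{x\}$ and quantifying over $x$, to the statement that for every element $x$ the truth value of ``$\{i,j\}$ meets $I_x$'' does not depend on the pair $\{i,j\}$.

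For the forward direction, I would suppose all pairwise unions equal a common set $U$. If $x\notin U$, then $x\notin S_i\cup S_j$ for every pair, which forces $x\notin S_i$ for all $i$, i.e. $|I_x|=0$. If $x\in U$, then $x\in S_i\cup S_j$ for every pair $i<j$, so $I_x$ meets every $2$-element subset of $[k]$; equivalently $[k]\setminus I_x$ contains no $2$-element subset, so $|[k]\setminus I_x|\le 1$ and $|I_x|\ge k-1$. Thus every element lies in $0$ or at least $k-1$ of the $S_i$. Conversely, assume every element $x$ has $|I_x|\in\{0\}\cup\{k-1,k\}$. If $|I_x|=0$, then $x$ lies in none of the pairwise unions. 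If $|I_x|\ge k-1$, then $[k]\setminus I_x$ has at most one element, so every pair $\{i,j\}\subseteq[k]$ meets $I_x$ and $x$ lies in every pairwise union. In either case membership of $x$ in $S_i\cup S_j$ is independent of $(i,j)$; taking the union over all $x$ shows $S_i\cup S_j=\{\,x : |I_x|\ge k-1\,\}$ for every pair, which is exactly the required constancy.

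There is no genuine obstacle here: the argument is just a translation between membership in set-unions and a covering condition on $[k]$. The only step worth stating carefully is ``$I_x$ meets every $2$-subset of $[k]$ $\Longrightarrow$ $|I_x|\ge k-1$'', which holds because two distinct indices $a,b\notin I_x$ would give a $2$-subset $\{a,b\}$ disjoint from $I_x$, a contradiction. (Nothing beyond $k\ge 2$ is actually needed; the hypothesis $k\ge 3$ is simply inherited from the surrounding context.)
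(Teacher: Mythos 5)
The paper states this proposition with the remark that it ``is easy to check'' and gives no proof, so there is nothing to compare against; your argument fills that gap. The reduction to the pointwise index set $I_x=\{i:x\in S_i\}$ and the observation that $x\in S_i\cup S_j\iff\{i,j\}\cap I_x\neq\varnothing$ is exactly the natural route, both directions are carried out correctly, and the parenthetical remark that $k\ge 2$ suffices is accurate. No issues.
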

\begin{defn}[Blow-up]\label{defn:blowup}
Let $U$ be a finite set equipped with a partition
\[
U=U_1\sqcup\cdots\sqcup U_t\sqcup \widetilde{U}.
\]
For $F\subseteq [t]$, define the \emph{blow-up of $F$ over $(\{U_i\}_{i=1}^t,\widetilde U)$} to be the family
\[
B(F,\{U_i\}_{i=1}^t,\widetilde{U})
:=\left\{P\subseteq U:\ |P\cap U_i|=\mathbf{1}_{i\in F}\ \text{for all }i\in[t],\ \text{and }P\cap \widetilde{U}=\varnothing \right\}.
\]
For a family $\mathcal F\subseteq 2^{[t]}$, define its blow-up over $(\{U_i\}_{i=1}^t,\widetilde U)$ by
\[
B(\mathcal F,\{U_i\}_{i=1}^t,\widetilde{U})
:=\bigsqcup_{F\in\mathcal F} B(F,\{U_i\}_{i=1}^t,\widetilde{U}),
\]
where the union is disjoint by definition.
\end{defn}

\begin{prop}\label{prop:blowup-ok}
Let $k\ge 3$. In the notation of Definition~\ref{defn:blowup}, if $\mathcal F\subseteq 2^{[t]}$
is $k$-cosunflower-free, then its blow-up
$B(\mathcal F,\{U_i\}_{i=1}^t,\widetilde U)\subseteq 2^{U}$
is also $k$-cosunflower-free.
\end{prop}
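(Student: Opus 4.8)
The plan is to argue by contradiction using the characterization in Proposition~\ref{prop:pairwise-union-characterization}: a family is $k$-cosunflower-free precisely when it contains no $k$ distinct members such that every element of the ground set lies in either $0$ or at least $k-1$ of them. So suppose the blow-up $B(\mathcal F,\{U_i\}_{i=1}^t,\widetilde U)$ contains $k$ distinct sets $P_1,\dots,P_k$ forming a $k$-cosunflower. Each $P_\ell$ belongs to $B(F_\ell,\{U_i\},\widetilde U)$ for a unique $F_\ell\in\mathcal F$, determined by $F_\ell=\{i\in[t]: |P_\ell\cap U_i|=1\}$ (and $P_\ell\cap\widetilde U=\varnothing$).

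First I would show the base sets $F_1,\dots,F_k$ themselves form a $k$-cosunflower in $\mathcal F$, i.e.\ every index $i\in[t]$ lies in $0$ or at least $k-1$ of the $F_\ell$. Fix $i$. For each $\ell$, $i\in F_\ell$ iff $P_\ell$ meets $U_i$ in exactly one point, say $x_\ell\in U_i$. If $i$ lies in at least one $F_\ell$, pick such an $\ell$ and the corresponding point $x_\ell\in P_\ell\cap U_i$. Since $P_1,\dots,P_k$ is a $k$-cosunflower, the element $x_\ell$ lies in at least $k-1$ of the sets $P_1,\dots,P_k$; for every such $P_m$ containing $x_\ell$ we have $P_m\cap U_i\neq\varnothing$, hence $|P_m\cap U_i|=1$ (membership in a blow-up forces intersections with each $U_i$ to have size at most $1$), so $i\in F_m$. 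Thus $i$ belongs to at least $k-1$ of the $F_m$. This establishes the dichotomy for every $i$, so $F_1,\dots,F_k$ is a $k$-cosunflower provided the $F_\ell$ are distinct.

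Next I would rule out the degenerate case. If two of the base sets coincide, say $F_a=F_b=:F$, then $P_a$ and $P_b$ are distinct members of $B(F,\{U_i\},\widetilde U)$, so they differ in their choice of point inside some block $U_i$ with $i\in F$: there is $x\in P_a\cap U_i$ with $x\notin P_b$, and $|P_b\cap U_i|=1$ means $P_b\cap U_i=\{y\}$ with $y\neq x$. Now $x$ lies in $P_a$ but, for any $P_m$ with $x\in P_m$, we get $P_m\cap U_i=\{x\}$; in particular $P_b$ does not contain $x$, so $x$ lies in at most $k-1$ of the $P_\ell$. To contradict the cosunflower property I need $x$ to lie in at most $k-2$ of them, which forces me to exhibit a \emph{second} set avoiding $x$. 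Here I use that $P_a\cap U_i=\{x\}$ means $P_a$ contains no other point of $U_i$; any $P_m$ containing $x$ has $P_m\cap U_i=\{x\}$, whereas $P_b$ has $P_b\cap U_i=\{y\}$ with $y\ne x$, so $P_b$ avoids $x$ and moreover $P_b\neq P_a$ already. To get a genuine contradiction I should instead run the argument with the point $y\in P_b\cap U_i$: since $y\notin P_a$ (as $P_a\cap U_i=\{x\}$), and since $F_a=F_b$, any $P_m$ with $F_m=F$ and $P_m\neq P_b$ may or may not contain $y$; the cleanest route is to observe directly that distinct $P_a,P_b$ in the same blow-up block cannot both be extended to a cosunflower because some coordinate point of $U_i$ is covered exactly once among all $P_\ell$ with that base set and zero times by the others — I will formalize this by a short counting argument on $|\{\ell: P_\ell\cap U_i = \{x\}\}|$ for a suitable $x$, showing it equals a value strictly between $1$ and $k-2$. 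The hard part of the whole proof is precisely this bookkeeping in the degenerate case; once $F_1,\dots,F_k$ are shown distinct, the cosunflower property transfers upward as in the previous paragraph, and $k$-cosunflower-freeness of $\mathcal F$ gives the contradiction, completing the proof. $\qed$
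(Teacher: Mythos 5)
Your setup and the transfer of the cosunflower dichotomy to the base sets $F_1,\dots,F_k$ are correct and essentially identical to the paper's argument. The gap is in the distinctness step, which you yourself flag as the crux: you never actually carry out the counting argument you promise. You end with ``I will formalize this by a short counting argument on $|\{\ell: P_\ell\cap U_i = \{x\}\}|$ for a suitable $x$, showing it equals a value strictly between $1$ and $k-2$,'' but that formalization is exactly the missing content, and the claim as stated is not even quite right --- the count need not lie \emph{strictly} between $1$ and $k-2$; it could equal $1$, which is what gives the contradiction when $k\ge 3$.

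Here is the clean way to close the gap, which is the paper's route. Suppose $F_a=F_b$ with $P_a\neq P_b$, so there is a block $U_i$ with $P_a\cap U_i=\{x\}$, $P_b\cap U_i=\{y\}$, $x\neq y$. The index sets $\{\ell: x\in P_\ell\}$ and $\{\ell: y\in P_\ell\}$ are disjoint, because each $P_\ell$ meets $U_i$ in at most one point. Both $x$ and $y$ appear in at least one of the $P_\ell$, so by Proposition~\ref{prop:pairwise-union-characterization} each must appear in at least $k-1$ of them. Disjointness then gives $2(k-1)\le k$, impossible for $k\ge 3$. Your earlier detour --- trying to bound the multiplicity of $x$ alone by $k-2$ --- fails because $x$ could in principle appear in exactly $k-1$ of the $P_\ell$; the contradiction only materializes once you pair $x$ with $y$ and use that their supporting index sets cannot overlap. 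Without this step your proof is incomplete.
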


\begin{proof}
Suppose for a contradiction that $B(\mathcal F,\{U_i\}_{i=1}^t,\widetilde U)$ contains a
$k$-cosunflower $P_1,\dots,P_k$, i.e.\ the sets are distinct and their pairwise unions coincide.
For each $j$ define the index set
\[
F_j:=\{i\in[t]:\ P_j\cap U_i\neq\varnothing\}\in\mathcal F.
\]

Fix $i\in[t]$. Since each $P_j$ meets $U_i$ in either $0$ or $1$ element, an element of $U_i$ belongs
to exactly the number of those $P_j$ that choose it.
By Proposition~\ref{prop:pairwise-union-characterization} applied to $P_1,\dots,P_k$,
no element may belong to between $1$ and $k-2$ of the $P_j$.
Therefore the number of indices $j$ with $i\in F_j$ is either $0$ or at least $k-1$.
Applying Proposition~\ref{prop:pairwise-union-characterization} to the sets $F_1,\dots,F_k\subseteq[t]$
shows that their pairwise unions are constant.

If $F_u=F_v$ for some $u\neq v$, then $P_u$ and $P_v$ select the same set of blocks.
Since $P_u\neq P_v$, there is a block $U_i$ with $i\in F_u$ such that
$P_u\cap U_i=\{a\}$ and $P_v\cap U_i=\{b\}$ for distinct $a\neq b$.
Both $a$ and $b$ belong to at least one of the sets $P_1,\dots,P_k$, so by
Proposition~\ref{prop:pairwise-union-characterization} each must belong to at least $k-1$ of them.
This is impossible because $2(k-1)>k$ for $k\ge 3$.
Hence the sets $F_1,\dots,F_k$ are distinct.

Therefore, $F_1,\dots,F_k$ are distinct members of $\mathcal F$ with constant pairwise union, i.e.\ a
$k$-cosunflower in $\mathcal F$, contradicting that $\mathcal F$ is $k$-cosunflower-free.
\end{proof}

\begin{defn}[Product measure]\label{defn:prob_measure_1}
Let $A$ be a finite set, and associate weights $w_a\in[0,1]$ to each $a\in A$.
Define the \emph{product measure} $\mu_w$ on $2^A$ by
\[
\mu_w(B) := \prod_{a\in B} w_a \prod_{a\in A\setminus B} (1-w_a),
\qquad B\subseteq A.
\]
For a family $\mathcal A\subseteq 2^A$, write $\mu_w(\mathcal A):=\sum_{B\in\mathcal A}\mu_w(B)$. Equivalently, $\mu_w(B)$ is the probability that a random subset of $A$, formed by including each
$a\in A$ independently with probability $w_a$, is exactly $B$.
\end{defn}

\subsection{Uniform cosunflower-free families at fixed density}

We next recall a variation of Problem~\ref{prob:857} studied by Alon, Shpilka and Umans~\cite{AAC12},
and record a convenient ``for all large $N$'' consequence that will be used in the weighted argument.

\begin{thm}\label{thm:ASU}
Fix an integer \(k\ge 3\). For a rational \(\beta >1\), let \(H(\beta,k)\) denote the following hypothesis:
\begin{quote}
\(H(\beta,k)\): There exists \(\eta>0\) such that for every integer \(n\) with \(\beta n\in\mathbb Z\), every family
\(\mathcal F\subseteq \binom{[\beta n]}{n}\) with
\(|\mathcal F|\ge \binom{\beta n}{n}^{\,1-\eta}\)
contains a \(k\)-sunflower.
\end{quote}
Then for any rationals \(\beta,\beta'>1\), the statements \(H(\beta,k)\) and \(H(\beta',k)\) are equivalent.

In particular, if \(\mu_k^{\mathrm S} = 2\), then \(H(\beta,k)\) fails for every rational \(\beta>1\),
and thus for every fixed rational \(\alpha\in(0,1)\) and every \(\eta>0\), there exist a constant \(\widetilde{N}=\widetilde{N}(\alpha,\eta)\) such that for every integer \(N\ge \widetilde{N}\) there is an
\(\lfloor \alpha N\rfloor\)-uniform, \(k\)-cosunflower-free family
\[
\mathcal G_N\subseteq \binom{[N]}{\lfloor \alpha N\rfloor}
\qquad\text{with}\qquad
|\mathcal G_N| \ge \binom{N}{\lfloor \alpha N\rfloor}^{1-\eta}.
\]
\end{thm}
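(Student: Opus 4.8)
The plan is to invoke Alon--Shpilka--Umans~\cite{AAC12} for the equivalence of the hypotheses $H(\beta,k)$, and then to derive the concrete ``for all large $N$'' statement by combining the failure of $H(2,k)$ with complementation and a tensor-and-padding argument. Three elementary stability facts will be used throughout. First, within a fixed ground set, complementation is a size-preserving involution that exchanges $k$-sunflower-free and $k$-cosunflower-free families while fixing the binomial coefficient $\binom{m}{s}=\binom{m}{m-s}$. Second, adjoining to every member of a family a new element that lies in all of them, or a new element that lies in none of them, alters neither the cardinality of the family nor its property of being $k$-sunflower-free, nor its property of being $k$-cosunflower-free; iterating, one may pad a uniform $k$-cosunflower-free family on $\binom{[m]}{s}$ to one on $\binom{[m+a+b]}{s+a}$ for any integers $a,b\ge 0$. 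Third, the disjoint $t$-fold tensor power of a uniform $k$-sunflower-free family $\mathcal F\subseteq\binom{[m]}{s}$ is a $k$-sunflower-free family on $\binom{[tm]}{ts}$ of cardinality $|\mathcal F|^t$ — this is the uniform case of the tensor power argument recorded in the footnote accompanying~\eqref{eq:lim} — and, composing with complementation blockwise, the same holds with ``$k$-sunflower-free'' replaced by ``$k$-cosunflower-free''. Granting these, the equivalence $H(\beta,k)\Leftrightarrow H(\beta',k)$ for all rationals $\beta,\beta'>1$ is a result of Alon--Shpilka--Umans~\cite{AAC12}, which I would simply cite. It is worth recording why this input is genuinely needed: a uniform $k$-sunflower-free family in $\binom{[\beta n]}{n}$ has at most $\binom{\beta n}{n}=\exp\bigl(\beta n\cdot h(1/\beta)+o(n)\bigr)$ members, where $h(x):=-x\log x-(1-x)\log(1-x)$, so tensoring and padding can shift the density but must lose the fixed multiplicative factor $\frac{\beta\,h(1/\beta)}{\beta'\,h(1/\beta')}$ in the exponent whenever $\beta\ne\beta'$; a relative size of $\binom{\cdot}{\cdot}^{1-o(1)}$ therefore cannot be transported between uniformities by hand, and the argument of~\cite{AAC12}, which passes through the full Boolean-lattice formulation, is indispensable here.

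Next I would show that $\mu_k^{\mathrm S}=2$ forces $H(2,k)$ to fail. Fix $\eta>0$. By~\eqref{eq:lim} there is, for every small $\eps>0$, a bound $F_k(N)\ge(2-\eps)^N$ valid for all large $N$. Given such an $N$, take a $k$-sunflower-free family $\mathcal A\subseteq 2^{[N]}$ of this size and pigeonhole over its $N+1$ layers to extract an $r$-uniform $k$-sunflower-free subfamily $\mathcal A_r$ with $|\mathcal A_r|\ge(2-\eps)^N/(N+1)$. From $\binom{N}{r}\ge|\mathcal A_r|$ and $\binom{N}{r}\le\exp\bigl(N\,h(r/N)\bigr)$ one obtains $h(r/N)\ge\log(2-\eps)-o(1)$, hence $r/N=\tfrac12+o_\eps(1)$ with $o_\eps(1)\to 0$ as $\eps\to 0$. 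Padding $\mathcal A_r$ by $|N-2r|$ fixed or useless elements, according to the sign of $N-2r$, produces a balanced $k$-sunflower-free family $\mathcal B\subseteq\binom{[m]}{m/2}$ with $m=2\min(r,N-r)=(1+o_\eps(1))N$ and $|\mathcal B|=|\mathcal A_r|$. Using $\binom{m}{m/2}=\exp\bigl(m\log 2+O(\log m)\bigr)$ and $N\ge(1-o_\eps(1))m$, a short computation gives $\log|\mathcal B|\ge(1-o_\eps(1))m\log 2-O(\log m)$; taking $\eps$ small enough that the $o_\eps(1)$ term is below $\eta$ makes the right-hand side exceed $\log\binom{m}{m/2}^{1-\eta}$ once $m$ is large. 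Since arbitrarily large such $m$ arise for every $\eta>0$, $H(2,k)$ fails, and therefore, by the first part, $H(\beta,k)$ fails for every rational $\beta>1$.

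Finally I would construct the families $\mathcal G_N$. Fix a rational $\alpha\in(0,1)$ and $\eta>0$, and put $\beta:=1/(1-\alpha)$, a rational exceeding $1$. Since $H(\beta,k)$ fails, for every $\eta_1>0$ there are an integer $n$ with $\beta n\in\mathbb{Z}$ and a $k$-sunflower-free $\mathcal F\subseteq\binom{[\beta n]}{n}$ with $|\mathcal F|\ge\binom{\beta n}{n}^{1-\eta_1}$; moreover the admissible $n$ must tend to infinity as $\eta_1\to 0$, since otherwise, along a subsequence, the entire family $\binom{[\beta n]}{n}$ for a fixed small $n$ would be $k$-sunflower-free, which fails for all but trivially small $n$. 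Complementing inside $[\beta n]$ turns $\mathcal F$ into a uniform $k$-cosunflower-free family $\mathcal H\subseteq\binom{[N_0]}{\alpha N_0}$ with $N_0:=\beta n$ as large as we wish, $|\mathcal H|\ge\binom{N_0}{\alpha N_0}^{1-\eta_1}$, and density exactly $\alpha$ (here $\alpha N_0=(\beta-1)n\in\mathbb{Z}$). For a given large $N$, set $t:=\lfloor N/N_0\rfloor-1$, form $\mathcal H^{\otimes t}\subseteq\binom{[tN_0]}{\alpha tN_0}$, and pad with $a:=\lfloor\alpha N\rfloor-\alpha tN_0$ fixed elements and $b:=N-tN_0-a$ useless ones; since $tN_0\le N-N_0$ and $N_0\ge 1/\alpha$, both $a$ and $b$ are nonnegative. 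The outcome is an $\lfloor\alpha N\rfloor$-uniform $k$-cosunflower-free family $\mathcal G_N\subseteq\binom{[N]}{\lfloor\alpha N\rfloor}$ with $|\mathcal G_N|=|\mathcal H|^t$. Because $\log\binom{M}{\lfloor\alpha M\rfloor}=M\,h(\alpha)+O(\log M)$ and $tN_0=(1-o_N(1))N$, one has $\log|\mathcal G_N|\ge t(1-\eta_1)\bigl(N_0\,h(\alpha)-O(\log N_0)\bigr)\ge(1-\eta_1)\bigl(1-O(\tfrac{\log N_0}{N_0})-o_N(1)\bigr)N\,h(\alpha)$; choosing $\eta_1$ small enough — which, by the observation just made, also forces $N_0$ to be large, so that $O(\tfrac{\log N_0}{N_0})$ is small — makes this at least $(1-\eta)N\,h(\alpha)+O(\log N)=\log\binom{N}{\lfloor\alpha N\rfloor}^{1-\eta}$ for every $N\ge\widetilde{N}(\alpha,\eta)$, which is what was claimed.

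The step I expect to be the main obstacle is the first assertion, the equivalence of $H(\beta,k)$ across different values of $\beta$: as noted above, elementary tensoring and padding lose a multiplicative constant in the exponent as soon as the density changes, so this is precisely where the Alon--Shpilka--Umans argument is essential and where a self-contained proof seems out of reach here. The only other point needing care — which we do handle ourselves — is the bookkeeping in the last paragraph: to keep the relative size at $\binom{N}{\lfloor\alpha N\rfloor}^{1-\eta}$ through tensoring and padding one needs the starting ground set $N_0$ to be large, so that the $\Theta(\sqrt{\cdot})$ corrections to binomial coefficients are negligible, which is why we first observe that the negation of $H(\beta,k)$ automatically provides arbitrarily large admissible ground sets as $\eta_1\to 0$.
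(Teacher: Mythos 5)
Your argument follows the paper's approach: cite Alon--Shpilka--Umans for the $H(\beta,k)$-equivalence, and then derive the ``for all large $N$'' statement from $\mu_k^{\mathrm S}=2$ via the pigeonhole/entropy argument, complementation, tensoring, and padding — which is exactly what the paper compresses into its one-line remark that ``their argument, together with~\eqref{eq:lim}, easily gives'' the conclusion, so you have simply supplied the details the paper defers. One small slip worth fixing: after padding $\mathcal A_r$ to a balanced family the ground-set size is $m=2\max(r,N-r)$, not $2\min(r,N-r)$ (padding can only enlarge the ground set), though this does not affect the asymptotic $m=(1+o_\eps(1))N$ or anything downstream; and the assertion that $\binom{[\beta n]}{n}$ fails to be $k$-sunflower-free ``for all but trivially small $n$'' deserves a sentence (e.g.\ the sets $K\cup\{n-1+i\}$, $1\le i\le k$, with $K=\{1,\dots,n-1\}$ form a $k$-sunflower once $n\ge 1+(k-1)/(\beta-1)$).
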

\begin{proof}
The equivalence of \(H(\beta,k)\) for different rationals \(\beta>1\), as well as the stated consequence from
\(\mu_k^{\mathrm S} = 2\), is proved in \cite[Theorem~2.4]{AAC12} (for \(k=3\)); the same argument applies to any
fixed \(k\ge 3\) since it uses only heredity of being sunflower-free under restrictions and Cartesian products. In addition, their argument, together with \eqref{eq:lim}, easily gives the ``all sufficiently large $N$'' part of the theorem.
\end{proof}

\subsection{A weighted cosunflower-free family of large product measure}

We now combine Theorem~\ref{thm:ASU} with the blow-up construction to obtain a new weighted
version of Theorem~\ref{thm:ASU}.

\begin{lem}\label{lem:weighted-sunflower}
Suppose \(\mu_k^{\mathrm S} = 2\). Then for any \(\epsilon>0\), there exists \(K=K(\epsilon)>0\)
such that the following holds. Let \(A\) be a finite set, and associate weights
\(w_a\in[0,1]\) to each \(a\in A\). Assume:
\begin{enumerate}
\item The total weight \(W:=\sum_{a\in A}w_a\) satisfies \(W\ge K\).
\item Each weight satisfies \(w_a\le K^{-1}\).
\end{enumerate}
Then there exists a \(k\)-cosunflower-free family \(\mathcal A\subseteq 2^A\) such that
\[
\mu_w(\mathcal A) \ge 2^{-\epsilon W}.
\]
\end{lem}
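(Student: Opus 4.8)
The plan is to deduce the weighted statement from the uniform statement (Theorem~\ref{thm:ASU}) via the blow-up construction (Proposition~\ref{prop:blowup-ok}), by approximating the product measure $\mu_w$ on $2^A$ by a uniform measure on a carefully chosen ground set. First I would reduce to the case where all weights are rational and moreover are rational with a common denominator: since the $w_a$ take only finitely many values and we only need a lower bound $2^{-\epsilon W}$, I can replace each $w_a$ by a nearby rational $w_a' \le w_a$ (losing only a multiplicative constant depending on $\epsilon$ in the exponent, which is absorbed by choosing $K$ large and using $W \ge K$). Write $w_a = p_a/q$ for a common denominator $q$. Now take the ground set $U$ to be a disjoint union $\bigsqcup_{a \in A} U_a$ of blocks with $|U_a| = q$, so $|U| = q|A|$, together with possibly a padding part $\widetilde U = \varnothing$ (or nonempty if needed to make sizes work out). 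The idea is that choosing an element of $U_a$ with ``probability $p_a/q$'' is what a uniform $\alpha|U|$-element set does on average; I want to match the density $\alpha$ so that $\alpha |U| = \sum_a p_a$, i.e. $\alpha = W/|A|$ after the rationalization (with $\lfloor\alpha N\rfloor$ adjustments handled by the padding set $\widetilde U$).

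Next I would apply Theorem~\ref{thm:ASU} with $N := |U| = q|A|$ and density parameter $\alpha$ (rational, in $(0,1)$ once $K$ is large since each $w_a$ is small and $W\ge K$ forces $|A|$ large relative to $\alpha$), with the $\eta$ in that theorem chosen small in terms of $\epsilon$. This yields an $\lfloor\alpha N\rfloor$-uniform $k$-cosunflower-free family $\mathcal G \subseteq \binom{[N]}{\lfloor\alpha N\rfloor}$ with $|\mathcal G| \ge \binom{N}{\lfloor\alpha N\rfloor}^{1-\eta}$. I then restrict $\mathcal G$ to those sets $P$ that meet every block $U_a$ in either $0$ or $1$ element (and avoid $\widetilde U$): a positive proportion — in fact a $\mu_w$-mass comparable proportion — of the uniform mass is of this form, because a uniform random $\lfloor\alpha N\rfloor$-subset of $U$ meets each size-$q$ block in at most one element with probability bounded below by a constant depending only on $q$ and $\alpha$ per block... this is the delicate point. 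Rather than hope the blocks are small, I would instead take $q$ itself large but then the ``at most one element per block'' event is rare. The correct move is the reverse: subdivide each $U_a$ of size $q$ into $q$ singletons is not possible; instead, I realize the blow-up should go the other way — I should take each block $U_i$ indexed by $i \in [t]$ where $t$ is large, group the coordinates $a$ so that $w_a$ becomes a block-choosing probability, and use that a uniform set of the right density, conditioned on being a blow-up, has law exactly $\mu_w$ after the identification. Concretely: let $t = q|A|$, let $U_i$ for $i\in[t]$ have size $M$ for a large auxiliary parameter $M$, and set $\widetilde U$ to absorb the slack; a uniform $\lfloor\alpha(Mt)\rfloor$-set meets each $U_i$ in $0$ or $1$ element with probability $\to$ something I can make close to $1$ by taking $M$ large relative to $1/\alpha$ and $t$... actually the cleanest route, and the one I would commit to, is: pick the block sizes $|U_i|$ and a density so that the uniform measure on $\binom{U}{\text{size}}$, conditioned on the blow-up event, is \emph{exactly} a product measure on $2^{[t]}$, and show the blow-up event has uniform-probability at least $|\mathcal G|/\binom{N}{\alpha N}$ large enough that a $1-\eta$ power still leaves $\ge$ half the blow-ups; then relabel $[t]$ back to $A$ by merging the $q$ coordinates belonging to each $a$ via an ``OR''.

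The main obstacle — and where I expect to spend the real effort — is making the probability bookkeeping in this blow-up/conditioning step rigorous and quantitative: showing that after intersecting $\mathcal G$ with the blow-up structure and pushing forward to $2^A$, the resulting family $\mathcal A$ is still $k$-cosunflower-free (this is exactly Proposition~\ref{prop:blowup-ok}, so it is handled, provided I set up the partition correctly) \emph{and} that $\mu_w(\mathcal A) \ge 2^{-\epsilon W}$. For the latter I would compare $\mu_w(\mathcal A)$ to $|\mathcal G|/\binom{N}{\lfloor\alpha N\rfloor}$ using that the pushforward of the uniform measure under the blow-up-to-product-measure identification is $\mu_w$, times the uniform-probability of the blow-up event, and then use $\binom{N}{\lfloor\alpha N\rfloor} \ge 2^{H(\alpha)N - o(N)}$ with $H(\alpha)N$ tracking $H(W/|A|) \cdot q|A|$; choosing $q$ large makes $H(\alpha) q|A| = (1+o(1)) W \log_2(e) \cdot(\text{something})$... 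I would instead bound $\mu_w(\mathcal A)$ directly: $\mu_w(\mathcal A) \ge (\text{blow-up event prob})^{-1}\cdot |\mathcal G|\cdot \text{(product-measure of a single blow-up class)}$ and observe each single blow-up class has product measure roughly $\prod w_a^{p_a}(1-w_a)^{q-p_a}$ summed appropriately, giving a clean entropy expression; the $1-\eta$ loss multiplies the entropy, and since the entropy of $\mu_w$ is $O(W \log(1/w_{\min})) = O(W\log K)$ while we are allowed $2^{-\epsilon W}$, choosing $\eta$ small relative to $\epsilon$ and $K$ large relative to everything closes the bound. Every constant ($q$, $M$, $\eta$, $\widetilde N$) ultimately depends only on $\epsilon$, which is what is needed for $K = K(\epsilon)$.
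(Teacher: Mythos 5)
Your outline has the right ingredients (Theorem~\ref{thm:ASU} plus Proposition~\ref{prop:blowup-ok}), but it never commits to a coherent construction, and the route you keep returning to cannot be patched as written. The chief error is the claim that conditioning a uniform measure on $\binom{U}{\lfloor\alpha N\rfloor}$ on ``being a blow-up'' yields a product measure after identification: it does not, because the fixed total size couples the blocks, so the identity ``has law exactly $\mu_w$ after the identification'' fails. Relatedly, the expression $\mu_w(\mathcal A)\ge(\text{blow-up event prob})^{-1}\cdot|\mathcal G|\cdot(\text{mass of one class})$ in your last paragraph is not a valid inequality, and the entropy heuristic points the wrong way: the $\mu_w$-mass of any \emph{single} set of the relevant size is roughly $2^{-\sum_a H(w_a)}$, which is already far smaller than $2^{-\epsilon W}$, so the mass must be accumulated by summing over the family, not controlled by bounding the entropy of $\mu_w$.

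The paper avoids all of this by never introducing a uniform measure on (a replicated or subdivided copy of) $A$. It greedily partitions $A$ itself into blocks $A_1,\dots,A_n,\widetilde A$ of $w$-weight about $c\asymp\epsilon^2$ each (the bound $w_a\le K^{-1}$ makes the blocks tight), so that $n\asymp W/c$; then it applies Theorem~\ref{thm:ASU} on the index set $[n]$ at density $\alpha=c$ to get a uniform cosunflower-free $\mathcal F\subseteq\binom{[n]}{m}$, $m=\lfloor cn\rfloor$, with $|\mathcal F|\ge\binom{n}{m}^{1-\delta}$, and takes $\mathcal A$ to be its blow-up over $(\{A_i\},\widetilde A)$. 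Because $\mu_w$ is already a product measure on $2^A$, the $\mu_w$-mass of each blow-up class $B(F,\dots)$ factorizes \emph{exactly} as $s\prod_{i\in F}q_i\prod_{i\notin F}r_i$, where $q_i,r_i,s$ are the per-block one-/zero-element selection probabilities, each bounded below in terms of $c$ and $K^{-1}$. Summing over $\mathcal F$ and using $\binom{n}{m}^{1-\delta}\ge\binom{n}{m}e^{-\delta n}$ together with a concrete lower bound on $\binom{n}{m}c^m$ gives $\mu_w(\mathcal A)\ge\exp\bigl(-O(cW)-O(\delta n)\bigr)\ge 2^{-\epsilon W}$ once $c$ and $\delta$ are small in terms of $\epsilon$ and $K$ is large. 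The idea your proposal circles but never lands on is that the ASU input must live on the index set of blocks (size $n$, much smaller than $|A|$), not on a copy of $A$, and that the weighted mass is then computed directly by multiplicativity --- no conditioning step or uniform-to-product comparison is needed.
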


\begin{proof}
Fix \(\epsilon>0\). Since the conclusion for \(\epsilon_0\) implies the conclusion for any
\(\epsilon\ge \epsilon_0\), we may assume \(0<\epsilon\le 1/10\).

\medskip
\noindent\textbf{1. Partition.}
Let \(c:=1/\lceil 1/\epsilon^2\rceil\). Then \(c\in(0,1)\cap\mathbb Q\) and \(c\asymp \epsilon^2\);
in particular \(c\le \epsilon^2\le \epsilon/10\).
We iteratively construct a partition
\[
A=A_1\sqcup A_2\sqcup\cdots\sqcup A_n\sqcup \widetilde A
\]
as follows: as long as there exists a subset of the remaining elements with total weight at least \(c\),
take \(A_i\) to be a minimal such subset and remove it; at the end, put the remaining elements into
\(\widetilde A\). By minimality,
\begin{equation}\label{eq:minimal_condition_c}
\sum_{a\in A_i}w_a \in [c, c+K^{-1}],
\end{equation}
and \(\sum_{b\in \widetilde A}w_b\le c\).
Consequently,
\begin{equation}\label{eq:partition_construction_wck_1}
n \in \left[\frac{W-c}{c+K^{-1}}, \frac{W}{c}\right].
\end{equation}
Define
\[
r_i:=\prod_{a\in A_i}(1-w_a),\qquad
q_i:=\sum_{a\in A_i}w_a\prod_{b\in A_i\setminus\{a\}}(1-w_b),\qquad
s:=\prod_{b\in\widetilde A}(1-w_b).
\]
Thus \(r_i\) is the probability that a \(\mu_w\)-random subset chooses no elements from \(A_i\),
and \(q_i\) is the probability it chooses exactly one element from \(A_i\).

\medskip
\noindent\textbf{2. A large uniform index family.}
Set
\[
\delta:=\frac{\epsilon c}{10}.
\]
Apply Theorem~\ref{thm:ASU} with \(\alpha=c\) and \(\eta=\delta\). By~\eqref{eq:partition_construction_wck_1} we have
\(n \ge \frac{W-c}{c+K^{-1}}\), so by increasing \(K\) (depending on \(\epsilon\)) we may assume that
\(W\ge K \ge 100\) and \(n\ge \widetilde{N}(c,\delta)\). Hence Theorem~\ref{thm:ASU}
provides a \(k\)-cosunflower-free family
\[
\mathcal F\subseteq \binom{[n]}{m},\qquad m:=\lfloor cn\rfloor,
\]
satisfying
\begin{equation}\label{eq:F-size}
|\mathcal F| \ge \binom{n}{m}^{1-\delta}.
\end{equation}

\medskip
\noindent\textbf{3. Blow-up.}
Let
\[
\mathcal A:=B(\mathcal F,\{A_i\}_{i=1}^n,\widetilde{A})
\]
be the blow-up of $\mathcal F$ over $(\{A_i\}_{i=1}^n,\widetilde{A})$. By Proposition~\ref{prop:blowup-ok}, \(\mathcal A\) is \(k\)-cosunflower-free.

\medskip
\noindent\textbf{4. Lower bounding \(\mu_w(\mathcal A)\).}
For each \(F\in\mathcal F\), by independence across the blocks we have
\[
\mu_w\left(B(F,\{A_i\}_{i=1}^n,\widetilde{A})\right)= s\prod_{i\in F} q_i\prod_{i\notin F} r_i.
\]
We now lower bound \(r_i,q_i,s\). Since \(K\ge 100\), we know that \(w_a\le K^{-1} \le 0.01\) and
\(
1-x\ge e^{-x-x^2}
\)
holds for all \(x\in[0,0.01]\).
Then
\[
r_i=\prod_{a\in A_i}(1-w_a) \ge \exp\left(-\sum_{a\in A_i}(w_a+w_a^2)\right).
\]
Since \(w_a^2\le K^{-1}w_a\) and \eqref{eq:minimal_condition_c} holds, we get
\[
\sum_{a\in A_i}(w_a+w_a^2)\le (1+K^{-1})\sum_{a\in A_i}w_a
\le (1+K^{-1})(c+K^{-1})\le c+2 K^{-1}.
\]
Hence
\begin{equation}\label{eq:ri-lb}
r_i \ge \exp\left(-c-2 K^{-1}\right).
\end{equation}
Moreover,
\[
\frac{q_i}{r_i}
=\sum_{a\in A_i}\frac{w_a}{1-w_a}
\ge \sum_{a\in A_i}w_a
\ge c,
\]
so
\begin{equation}\label{eq:qi-lb}
q_i \ge c\exp\left(-c-2 K^{-1}\right).
\end{equation}
Similarly, since \(\sum_{b\in \widetilde A}w_b\le c\), we have
\begin{equation}\label{eq:s-lb}
s \ge \exp\left(-c-2 K^{-1}\right).
\end{equation}
Since every \(F\in\mathcal F\) has size \(|F|=m\), combining \eqref{eq:ri-lb}, \eqref{eq:qi-lb}, \eqref{eq:s-lb} yields
\[
\mu_w\left(B(F,\{A_i\}_{i=1}^n,\widetilde{A})\right)
= s\prod_{i\in F} q_i\prod_{i\notin F} r_i
\ge c^{m}\exp\Bigl(-\left(c+2 K^{-1}\right)(n+1)\Bigr).
\]
Summing over \(F\in\mathcal F\) and using \eqref{eq:F-size} gives
\begin{equation}\label{eq:mu-start}
\mu_w(\mathcal A)
\ge \binom{n}{m}^{1-\delta} c^{m}
\exp\left(-\left(c+2 K^{-1}\right)(n+1)\right).
\end{equation}

\medskip
\noindent\textbf{5. Estimating the RHS.}
Since \(\binom{n}{m}\le 2^n\le e^n\), we have
\[
\binom{n}{m}^{1-\delta}
=\binom{n}{m}\cdot \binom{n}{m}^{-\delta}
\ge \binom{n}{m}\,e^{-\delta n}.
\]
Substituting this into \eqref{eq:mu-start} and choosing \(K\) so large that
\(
\frac{2}{K}(n+1)\le \delta n
\)
(for all \(n\) in our range; it suffices to take \(K\ge 4/\delta\)),
we obtain
\begin{equation}\label{eq:mu-mid}
\mu_w(\mathcal A)
\ge \binom{n}{m}\,c^{m}\,e^{-c(n+1)}\,e^{-2\delta n}.
\end{equation}
Next we use the elementary lower bound (valid for \(0<k<n\)):
\begin{equation}\label{eq:binom-lb}
\binom{n}{k} \ge \frac{1}{n+1}
\left(\frac{n}{k}\right)^{k}\left(\frac{n}{n-k}\right)^{n-k}.
\end{equation}
Applying \eqref{eq:binom-lb} with \(k=m\) and multiplying by \(c^m\), using \(m\le cn\), gives
\[
\binom{n}{m}c^m
\ge \frac{1}{n+1}\left(\frac{cn}{m}\right)^m\left(\frac{n}{n-m}\right)^{n-m}
\ge \frac{1}{n+1}\left(\frac{n}{n-m}\right)^{n-m}.
\]
Since \(m=\lfloor cn\rfloor\ge cn-1\), we have
\(
n-m\le (1-c)n+1
\),
and hence \(\frac{n}{n-m} \ge \frac{1}{1-c+1/n}\). Therefore,
\[
\binom{n}{m}c^m
\ge \frac{1}{n+1}\left(1-c+1/n\right)^{-(n-m)}
\ge \frac{1}{n+1}\exp\Bigl((c-1/n)(n-m)\Bigr),
\]
where we used \(1-x\le e^{-x}\).
Substituting this into \eqref{eq:mu-mid} yields
\begin{align*}
\mu_w(\mathcal A)
&\ge \frac{1}{n+1}
\exp\Bigl((c-1/n)(n-m)\Bigr)\exp\Bigl(-c(n+1)\Bigr)\exp(-2\delta n)\\
&= \exp\Bigl(-c(m+1)-\frac{n-m}{n}\Bigr)\exp(-2\delta n)\\
&\ge \exp\bigl(-c(m+1)-1\bigr)\exp(-2\delta n),
\end{align*}
since \((n-m)/n\le 1\). Now \(m\le cn\) implies \(c(m+1)\le c^2n+c\), and by~\eqref{eq:partition_construction_wck_1} we have \(n\le W/c\) and thus \(c^2n\le cW\). By the definition of \(\delta\), we know that
\[
2\delta n \le 2\cdot \frac{\epsilon c}{10}\cdot \frac{W}{c}=\frac{\epsilon}{5}W.
\]
Thus
\[
\mu_w(\mathcal A)
\ge \exp\bigl(-cW-c-1\bigr)\exp\Bigl(-\frac{\epsilon}{5}W\Bigr).
\]
Finally, let \(\lambda=\log 2-\frac{1}{5}>0\). Since \(c\le \epsilon^2 \le \epsilon/10\), for \(W\ge 101/\epsilon\) we have \(cW+c+1\le \lambda \epsilon W\), hence
\[
\exp(-cW-c-1) \ge \exp(-\lambda \epsilon W).
\]Putting everything together,
\[
\mu_w(\mathcal A)
\ge \exp(-\lambda \epsilon W)\exp\Bigl(-\frac{\epsilon}{5}W\Bigr)
=\exp(-(\log 2)\epsilon W)
=2^{-\epsilon W},
\]
as desired.
\end{proof}

\subsection{Proof of Theorem~\ref{thm:equivalence}}

\begin{proof}[Proof of Theorem~\ref{thm:equivalence}]
\emph{``If'' direction.}
Assume that $f_k(N)=(\log N)^{1-o(1)}$ as $N\to\infty$.
By Theorem~\ref{thm:B-implies-A} we have
\[
f_k(N)\ll_k (\log N)^{\mu_k^{\mathrm S}-1+o(1)}.
\]
Comparing exponents forces $\mu_k^{\mathrm S}-1\ge 1$, hence $\mu_k^{\mathrm S}\ge 2$.
Since trivially $\mu_k^{\mathrm S}\le 2$, we conclude that $\mu_k^{\mathrm S}=2$.

\medskip\noindent
\emph{``Only if'' direction.}
Assume that $\mu_k^{\mathrm S} = 2$.
Fix $\epsilon>0$. We will construct, for all sufficiently large $N$, a LCM-$k$-free set $A\subseteq[N]$
with
\[
\sum_{a\in A}\frac{1}{a}\ \gg_{\epsilon}\ (\log N)^{1-\epsilon}.
\]
Since trivially $f_k(N)\le \sum_{n\le N}\frac1n=\log N+O(1)$, this implies
$f_k(N)=(\log N)^{1-o(1)}$.

Let $K:=K(\epsilon/2)$ be as in Lemma~\ref{lem:weighted-sunflower} (applied with parameter $\epsilon/2$).
Let $T=T(N)\to\infty$ be a threshold to be chosen later, and let $P$ denote the set of primes in $[K,T]$.
For each $p\in P$ set
\[
w_p:=\frac{1}{p+1}.
\]
Then $w_p\le (K+1)^{-1}\le K^{-1}$ for all $p\in P$, and by Mertens' theorem
\[
W:=\sum_{p\in P} w_p=\sum_{p\in P}\frac{1}{p+1}=\log\log T+O_{\epsilon}(1),
\]
so for $T$ sufficiently large we have $W\ge K$.
Applying Lemma~\ref{lem:weighted-sunflower} (with $\epsilon/2$) to the ground set $P$ with weights $(w_p)_{p\in P}$,
we obtain a $k$-cosunflower-free family $\mathcal Q\subseteq 2^{P}$ such that
\[
\mu_w(\mathcal Q) \ge 2^{-\epsilon W/2}.
\]Define
\[
A_0:=\left\{ \prod_{p\in Q}p:\ Q\in\mathcal Q\right\},
\qquad
A:=A_0\cap [N].
\]

\begin{claim}\label{claim:the_final_clain_11-1}
$A_0$ (and hence $A$) is LCM-$k$-free.    
\end{claim}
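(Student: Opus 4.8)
The plan is to transport the $k$-cosunflower-free property of $\mathcal Q$ into an LCM-$k$-free property of $A_0$ via the bijection $Q\mapsto \prod_{p\in Q}p$. The key observation is that since the elements of $\mathcal Q$ are sets of primes, the map $Q\mapsto n_Q:=\prod_{p\in Q}p$ is injective (each such $n_Q$ is squarefree and its set of prime divisors is exactly $Q$), and it converts set-theoretic unions into least common multiples: for $Q,Q'\in\mathcal Q$ we have $\lcm(n_Q,n_{Q'})=\prod_{p\in Q\cup Q'}p = n_{Q\cup Q'}$. Consequently, distinct $n_{Q_1},\dots,n_{Q_k}$ with all pairwise lcm's equal corresponds to distinct $Q_1,\dots,Q_k\in\mathcal Q$ with all pairwise unions equal, i.e.\ a $k$-cosunflower.

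First I would argue by contradiction: suppose $n_1,\dots,n_k\in A_0$ form an LCM-$k$-tuple. Since every element of $A_0$ is of the form $n_Q$ for a unique $Q\in\mathcal Q$, write $n_j=n_{Q_j}$ with $Q_j\in\mathcal Q$; the $Q_j$ are distinct because the $n_j$ are distinct and the encoding is injective. Next I would compute, for each pair $i<j$, that $\lcm(n_{Q_i},n_{Q_j})$ is the squarefree integer whose prime support is $Q_i\cup Q_j$; since the $n_j$ are squarefree this is just the product $\prod_{p\in Q_i\cup Q_j} p$. Recovering the prime support from the lcm (again using squarefreeness), equality of all the pairwise lcm's forces $Q_i\cup Q_j$ to be independent of the pair $(i,j)$. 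Thus $Q_1,\dots,Q_k$ is a $k$-cosunflower in $\mathcal Q$, contradicting that $\mathcal Q$ is $k$-cosunflower-free. Hence $A_0$ is LCM-$k$-free, and the subset $A=A_0\cap[N]$ inherits this property trivially since any LCM-$k$-tuple in $A$ is one in $A_0$.

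There is essentially no serious obstacle here: the only point requiring a little care is to make sure the encoding is genuinely injective on $\mathcal Q$ and that one can read off the prime support of an lcm of squarefree integers — both follow immediately from unique factorization. I would state these as one or two short sentences rather than belabor them. The slightly subtle conceptual point, worth flagging explicitly, is the duality between sunflowers and cosunflowers: it is the \emph{co}sunflower-free property of $\mathcal Q$ (not the sunflower-free property) that is needed, because lcm corresponds to union; this is precisely why Lemma~\ref{lem:weighted-sunflower} was phrased in terms of cosunflower-free families rather than sunflower-free ones. After establishing the claim, the remaining work (not part of this statement) will be to verify the harmonic-sum lower bound $\sum_{a\in A}1/a\gg_\epsilon(\log N)^{1-\epsilon}$, which will require choosing the threshold $T=T(N)$ appropriately so that most of the $\mu_w$-mass of $\mathcal Q$ lands on integers $\le N$.
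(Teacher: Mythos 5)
Your proposal is correct and follows essentially the same route as the paper: encode elements of $A_0$ by their prime supports, use the fact that squarefreeness turns lcm into set union, and transport an LCM-$k$-tuple to a $k$-cosunflower in $\mathcal Q$. The observations you flag (injectivity of the encoding, recovering supports from lcm's via squarefreeness, and the sunflower/cosunflower duality) are exactly the points the paper relies on as well.
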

\begin{proof}[Proof of Claim~\ref{claim:the_final_clain_11-1}]
Each element of $A_0$ is squarefree and has all prime factors in $P$.
If $a=\prod_{p\in Q}p$ and $b=\prod_{p\in R}p$ with $Q,R\subseteq P$, then
\[
\operatorname{lcm}(a,b)=\prod_{p\in Q\cup R}p.
\]
Thus, if $a_1,\dots,a_k\in A_0$ are distinct and satisfy $\operatorname{lcm}(a_i,a_j)$ constant for all $i<j$,
then their prime support sets $Q_i\subseteq P$ satisfy $Q_i\cup Q_j$ constant for all $i<j$.
Hence $Q_1,\dots,Q_k$ form a $k$-cosunflower in $\mathcal Q$, contradicting that $\mathcal Q$ is
$k$-cosunflower-free. This proves the claim.
\end{proof}

For $Q\subseteq P$ we have
\[
\mu_w(Q)=\prod_{p\in Q}w_p\prod_{p\in P\setminus Q}(1-w_p),
\qquad
\mu_w(\emptyset)=\prod_{p\in P}(1-w_p),
\]
and since $w_p/(1-w_p)=1/p$, it follows that
\[
\frac{\mu_w(Q)}{\mu_w(\emptyset)}=\prod_{p\in Q}\frac{w_p}{1-w_p}=\frac{1}{\prod_{p\in Q}p}.
\]
Therefore,
\[
\sum_{a\in A_0}\frac{1}{a}
=\sum_{Q\in\mathcal Q}\frac{1}{\prod_{p\in Q}p}
=\sum_{Q\in\mathcal Q}\frac{\mu_w(Q)}{\mu_w(\emptyset)}
=\frac{\mu_w(\mathcal Q)}{\mu_w(\emptyset)}
\ge 2^{-\epsilon W/2}\cdot \frac{1}{\mu_w(\emptyset)}.
\]
Moreover,
\[
\frac{1}{\mu_w(\emptyset)}=\prod_{p\in P}\frac{1}{1-w_p}=\prod_{p\in P}\left(1+\frac{1}{p}\right)
\gg_{\epsilon} \log T,
\]
again by Mertens' theorem (the implied constant depends on $K$, hence on $\epsilon$).
Combining this with $W=\log\log T+O_{\epsilon}(1)$ yields
\begin{equation}\label{eq:A0-harmonic-lb}
\sum_{a\in A_0}\frac{1}{a}
\gg_{\epsilon} (\log T)^{1-\frac{\log 2}{2}\epsilon}.
\end{equation}It remains to bound the contribution of those $a\in A_0$ with $a>N$.
Note that if $\prod_{p\in Q}p\ge N$, then $\sum_{p\in Q}\log p\ge \log N$, hence for $a\in A_0\setminus [N]$ we have
\[
\frac{1}{\prod_{p\in Q}p}
\le \frac{1}{\log N}\cdot \frac{\sum_{p\in Q}\log p}{\prod_{p\in Q}p}.
\]
Therefore,
\begin{align*}
\sum_{a\in A_0\setminus [N]}\frac{1}{a}
&\le \sum_{\substack{Q\subseteq P\\ \prod_{p\in Q}p\ge N}}\frac{1}{\prod_{p\in Q}p}
\le \frac{1}{\log N}\sum_{Q\subseteq P}\frac{\sum_{p\in Q}\log p}{\prod_{p\in Q}p}\\
&= \frac{1}{\log N}\sum_{p\in P}\frac{\log p}{p}\prod_{q\in P\setminus\{p\}}\left(1+\frac{1}{q}\right)\\
&\le \frac{1}{\log N}\left(\prod_{q\in P}\left(1+\tfrac{1}{q}\right)\right)\sum_{p\in P}\frac{\log p}{p}.
\end{align*}
Using $\prod_{q\in P}(1+\frac{1}{q})\ll_{\epsilon}\log T$ and $\sum_{p\in P}\frac{\log p}{p}=\log T+O_{\epsilon}(1)$,
we obtain
\begin{equation}\label{eq:tail-bound}
\sum_{a\in A_0\setminus [N]}\frac{1}{a}
= O_{\epsilon}\left(\frac{\log^2 T}{\log N}\right).
\end{equation}

Combining \eqref{eq:A0-harmonic-lb} and \eqref{eq:tail-bound} gives
\[
\sum_{a\in A}\frac{1}{a}
\ge
\sum_{a\in A_0}\frac{1}{a}-\sum_{a\in A_0\setminus [N]}\frac{1}{a}
\gg_{\epsilon} (\log T)^{1-\frac{\log 2}{2}\epsilon}
-O_{\epsilon}\Bigl(\frac{\log^2 T}{\log N}\Bigr).
\]Finally choose $T=T(N)$ so that
\[
\log T = (\log N)^{1-\frac{3}{5}\epsilon}.
\]
Then the main term satisfies
\[
(\log T)^{1-\frac{\log 2}{2}\epsilon}
=(\log N)^{(1-\frac{3}{5}\epsilon)(1-\frac{\log 2}{2}\epsilon)}
\gg (\log N)^{1-\epsilon},
\]
while the error term satisfies
\[
\frac{\log^2 T}{\log N}=(\log N)^{1-\frac{6}{5}\epsilon}
=o\bigl((\log N)^{1-\epsilon}\bigr).
\]
Hence for all sufficiently large $N$ we have $\sum_{a\in A}\frac1a\gg_\epsilon (\log N)^{1-\epsilon}$. Since $\epsilon>0$ was arbitrary and, by Claim~\ref{claim:the_final_clain_11-1}, the set $A$ is LCM-$k$-free,
we conclude that $f_k(N)=(\log N)^{1-o(1)}$.
\end{proof}

\section{A lower bound via sunflower-free capacity}
\label{sec:lower-bound-via-capacity}
Finally, we prove Theorem~\ref{thm:mu-lb-implies-f-lb}. Our construction is quite similar to the construction in the previous section, except for a different choice of prime bucketing.

\begin{proof}[Proof of Theorem~\ref{thm:mu-lb-implies-f-lb}]
Let $\mu:=\mu_k^{\mathrm S}$. If $\mu\le 1$ the statement is trivial, so assume $\mu>1$ and fix $\varepsilon>0$.

\medskip\noindent
\textbf{1. A large cosunflower-free family on $[t]$.}
Recall that a $k$-sunflower is a collection of $k$ distinct sets whose pairwise intersections
are all equal. Dually, a $k$-cosunflower is a collection of $k$ distinct sets
whose pairwise unions are all equal. Taking complements in $[n]$ bijects $k$-sunflowers
and $k$-cosunflowers.

Choose $\lambda$ with $1<\lambda<\mu$ and
\begin{equation}\label{eq:lambda-close-to-mu}
\log\lambda>\log\mu-\frac{\varepsilon}{2}.
\end{equation}
By~\eqref{eq:lim}, there exists $t_0>0$ such that, for $t \geq t_0$ there always exists a $k$-cosunflower-free set family $\mathcal{F}_t \subset 2^{[t]}$ with $\abs{\mathcal{F}_t} \geq \lambda^t$.

\medskip\noindent
\textbf{2. Prime bucketing into $t$ disjoint blocks of harmonic sum $\ge 1$.}
Now fix $N$ large (how large will be specified later) and set
\[
L:=\log\log N,\qquad t:=\left\lfloor L-2\log L\right\rfloor,\qquad
\delta:=\frac{1}{L},\qquad y:=L^2,\qquad x:=N^{1/t}.
\]
For $N$ sufficiently large we have $t\ge t_0$, $y<x$, and for every prime $p>y$ we have $1/p\le 1/y\le\delta$. By Mertens' theorem,
\[
\sum_{p\le u}\frac1p=\log\log u+O(1)\qquad(u\to\infty),
\]
so
\begin{equation}\label{eq:enough-harmonic-mass}
\sum_{y<p\le x}\frac1p= \left(L- \log t\right) -\log\log y + O(1) \geq t(1+\delta)
\end{equation}for $N$ sufficiently large.

We now construct disjoint sets of primes $P_1,\dots,P_t\subseteq(y,x]$ greedily.
List the primes in $(y,x]$ in increasing order.
Start with $P_1=\varnothing$ and keep adding the next unused prime to the current $P_i$
until $\sum_{p\in P_i}\frac1p\ge 1$, then start $P_{i+1}$ and continue.
Since every added prime satisfies $1/p\le\delta$, each completed bucket satisfies
\begin{equation}\label{eq:bucket-sum}
1 \le \sum_{p\in P_i}\frac1p < 1+\delta.
\end{equation}
Thus \eqref{eq:enough-harmonic-mass} guarantees that at least $t$ buckets are completed. Fix such $P_1,\dots,P_t$ and set
\[
J_i:=\sum_{p\in P_i}\frac1p\in[1,1+\delta)\qquad(1\le i\le t).
\]

\medskip\noindent
\textbf{3. Encode $\mathcal F_t$ by squarefree integers $\le N$.}
For each $F\in\mathcal F_t$ define the set of squarefree integers
\[
\mathcal A(F):=\left\{ \prod_{i\in F}p_i: p_i\in P_i\ \text{for each }i\in F\right\},
\qquad
A:=\bigcup_{F\in\mathcal F_t}\mathcal A(F).
\]
Because the buckets $P_i$ are pairwise disjoint and we choose at most one prime from each,
each element of $A$ is squarefree and has all prime factors $\le x$.
Also $|F|\le t$, hence every $a\in A$ satisfies $a\le x^{|F|}\le x^t=N$, so $A\subseteq[N]$.

\begin{claim}\label{claim:a_is_k_good_1_2_3}
    $A$ is LCM-$k$-free.
\end{claim}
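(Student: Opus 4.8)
The plan is to follow the template of Claim~\ref{claim:the_final_clain_11-1} and of the LCM-freeness check inside the proof of Theorem~\ref{thm:lower}, adapting it to the present construction, in which members of $\mathcal F_t$ may have different sizes, so that an element of $A$ need not use a prime from every bucket. I would begin with the bookkeeping observation that, since the buckets $P_1,\dots,P_t$ are pairwise disjoint, each $a\in A$ determines a unique index set $F(a):=\{i\in[t]:\ P_i\cap\mathcal P(a)\neq\varnothing\}$, which lies in $\mathcal F_t$, and that for each $i\in F(a)$ the prime of $P_i$ dividing $a$ is likewise determined by $a$.

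Suppose for contradiction that $a_1,\dots,a_k\in A$ are distinct with a common pairwise lcm, and write $a_j=\prod_{i\in F_j}p_i^{(j)}$ with $F_j:=F(a_j)$ and $p_i^{(j)}\in P_i$. For a bucket $i$ and an index $j$, set $T_i^{(j)}:=\{p_i^{(j)}\}$ if $i\in F_j$ and $T_i^{(j)}:=\varnothing$ otherwise, so that $|T_i^{(j)}|=\mathbf{1}_{i\in F_j}$ and the prime support of $a_j$ is $\bigsqcup_i T_i^{(j)}$. Because the $P_i$ are disjoint, the prime support of $\lcm(a_u,a_v)$ inside $P_i$ is exactly $T_i^{(u)}\cup T_i^{(v)}$; hence equality of all pairwise lcms is equivalent to the statement that, for every $i$, the set $U_i:=T_i^{(u)}\cup T_i^{(v)}\subseteq P_i$ is independent of the pair $u<v$.

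The combinatorial heart then has two steps. First I would show $|U_i|\le 1$ for every $i$: if $|U_i|=2$, then each pair among the sets $T_i^{(1)},\dots,T_i^{(k)}$ (each of size at most $1$) would have to union to the same $2$-element set, forcing all $k\ge 3$ of them to be pairwise distinct singletons contained in a $2$-element set, which is impossible. Second, with $|U_i|\le 1$ in hand, when $U_i\neq\varnothing$ the relation $T_i^{(u)}\cup T_i^{(v)}=U_i$ prevents two indices from being simultaneously empty, so at most one $j$ has $T_i^{(j)}=\varnothing$; consequently every $i\in[t]$ belongs to either $0$ or at least $k-1$ of the sets $F_1,\dots,F_k$. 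Applying Proposition~\ref{prop:pairwise-union-characterization} to $F_1,\dots,F_k\subseteq[t]$ then shows that the pairwise unions $F_u\cup F_v$ are all equal.

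To conclude, I would split on whether the $F_j$ are distinct. If they are, then $F_1,\dots,F_k$ is a $k$-cosunflower in $\mathcal F_t$, contradicting that $\mathcal F_t$ was chosen $k$-cosunflower-free. If not, then $F_u=F_v$ for some $u\neq v$, and since $a_u\neq a_v$ there is a bucket $i\in F_u=F_v$ with $p_i^{(u)}\neq p_i^{(v)}$; then $|U_i|=|T_i^{(u)}\cup T_i^{(v)}|=2$, contradicting $|U_i|\le 1$. Either way we obtain a contradiction, so $A$ contains no LCM-$k$-tuple. The one genuinely new point compared with the fixed-exponent construction in Theorem~\ref{thm:lower} is that buckets may be skipped, so the per-bucket sets $T_i^{(j)}$ have size $0$ or $1$ instead of a fixed value $r=k-2$; the step that absorbs this is the use of Proposition~\ref{prop:pairwise-union-characterization} to pass from bucketwise union-constancy to union-constancy of the index sets $F_j$, and that is the step I would be most careful about.
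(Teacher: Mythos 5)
Your argument is correct, but it re-derives from scratch what the paper has already packaged as Proposition~\ref{prop:blowup-ok}. The paper's proof of Claim~\ref{claim:a_is_k_good_1_2_3} is two sentences: writing $\phi$ for the map sending a squarefree integer to its set of prime divisors, it observes that $\phi(A)$ is exactly the blow-up $B(\mathcal F_t,\{P_i\}_{i=1}^t,\varnothing)$ and that $\phi(\lcm(a,b))=\phi(a)\cup\phi(b)$ for squarefree $a,b$, so Proposition~\ref{prop:blowup-ok} immediately gives that $\phi(A)$ is $k$-cosunflower-free, hence $A$ is LCM-$k$-free. What you have written is essentially the proof of Proposition~\ref{prop:blowup-ok} unrolled for this particular blow-up: your ``$|U_i|\le 1$'' step together with ``at most one $j$ has $T_i^{(j)}=\varnothing$'' is the bucketwise analysis that, in the paper's version, comes from applying Proposition~\ref{prop:pairwise-union-characterization} first to the $k$ blown-up sets inside each block and then to the index sets $F_1,\dots,F_k$; and your final dichotomy on whether the $F_j$ are distinct is exactly the last paragraph of that proof. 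Both routes are valid; the paper's is shorter because it factors the combinatorics through a lemma proved once and reused here and in Lemma~\ref{lem:weighted-sunflower}. If you wanted to keep your self-contained argument, the only thing to tighten is the sentence ``consequently every $i\in[t]$ belongs to either $0$ or at least $k-1$ of the sets $F_1,\dots,F_k$'': you should state explicitly that when $U_i=\varnothing$ all $T_i^{(j)}$ are forced empty (so $i$ lies in none of the $F_j$), and when $U_i\neq\varnothing$ at most one $T_i^{(j)}$ can be empty (so $i$ lies in at least $k-1$ of them); as written this is compressed into one clause and a reader could miss the case split.
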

\begin{proof}[Proof of Claim~\ref{claim:a_is_k_good_1_2_3}]
Let $\phi$ map a natural number to its set of prime divisors. Then $\phi(A)$ is precisely the blow-up $B(\mathcal F_t, \{P_i\}_{i = 1}^t, \emptyset)$. Since all $a \in A$ are squarefree, $\phi(\lcm(a,b))=\phi(a)\cup\phi(b)$. By Proposition~\ref{prop:blowup-ok}, $\phi(A)$ is $k$-cosunflower-free, so $A$ is LCM-$k$-free.
\end{proof}

\medskip\noindent
\textbf{4. Compute the harmonic sum.}
For each $a\in A$ define its index set
\[
I(a):=\{i\in[t]: a \text{ is divisible by some prime in }P_i\}.
\]
Since the buckets are disjoint and each $a\in A$ contains at most one prime from each bucket,
we have $\mathcal A(F)=\{a\in A: I(a)=F\}$, and hence the families $\{\mathcal A(F)\}_{F\in\mathcal F_t}$
are pairwise disjoint. Therefore
\[
\sum_{a\in A}\frac1a=\sum_{F\in\mathcal F_t}\ \sum_{a\in\mathcal A(F)}\frac1a.
\]
For each $F\in\mathcal F_t$ we have
\[
\sum_{a\in \mathcal A(F)}\frac1a
=\prod_{i\in F}\Bigl(\sum_{p\in P_i}\frac1p\Bigr)
=\prod_{i\in F}J_i
\ \ge\ 1,
\]
using \eqref{eq:bucket-sum}. Hence
\[
\sum_{a\in A}\frac1a = \sum_{F\in\mathcal F_t}\sum_{a\in \mathcal A(F)}\frac1a \ge \sum_{F\in\mathcal F_t}1 = |\mathcal F_t|.
\]
Since $A$ is LCM-$k$-free, we obtain
\[
f_k(N) \ge \sum_{a\in A}\frac1a \ge |\mathcal F_t| \ge \lambda^{t}.
\]Recall that $t=L-2\log L+O(1)$ with $L=\log\log N$, hence $t=L-o(L)$.
Therefore
\[
\lambda^{t}
=\exp\left(t \log\lambda\right)
=(\log N)^{\log\lambda-o(1)}.
\]
By~\eqref{eq:lambda-close-to-mu}, $\log\lambda>\log\mu-\varepsilon/2$, so for $N$ sufficiently large
the $o(1)$ term is at most $\varepsilon/2$, and hence 
\[
f_k(N) \ge (\log N)^{\log\mu-\varepsilon}.
\]
This proves the theorem.
\end{proof}

\appendix

\section{Proof of Lemma~\ref{lem:harmonic-SS}}\label{appendix:SSS}

Let $L := \log\log N$. Fix $0<\eta<1$. For $\ell \ge 1$ and $x\ge 3$ let
\[
A_{\ell}(x)
:= \#\{ n\le x : \omega(n)=\Omega(n)=\ell \}
\]
be the counting function of squarefree $\ell$-almost primes. The squarefree version of the Sathe--Selberg theorem (see for instance~\cite[p.~237, ex.~4]{MoVa06} or~\cite[Eq.~(4.9)]{LJD20}) asserts that there exists a
continuous function $G(z)$ on $[0,2)$, given explicitly by
\[
G(z)
= \frac{1}{\Gamma(1+z)} \prod_{p}\Bigl(1 + \frac{z}{p}\Bigr)\Bigl(1-\frac1p\Bigr)^z,
\]
such that for every $\varepsilon\in(0,2)$, uniformly for integers $\ell$ with
$1\le \ell \le (2-\varepsilon)\log\log x$, one has
\begin{equation}\label{eq:sathe-selberg-squarefree}
A_{\ell}(x)
= G\left(\frac{\ell-1}{\log\log x}\right)
  \frac{x}{\log x}
  \frac{(\log\log x)^{\ell-1}}{(\ell-1)!}
  \left(1 + O_\varepsilon\left(\frac{\ell}{(\log\log x)^2}\right)\right).
\end{equation}
Note that $G(z)>0$ for all $z\in[0,2)$, since each Euler factor and
$1/\Gamma(1+z)$ is positive on $[0,2)$.

\medskip
Now fix $N\ge 3$ large and let $\ell$ be an integer with
$1\le \ell\le (1-\eta)L$.

\smallskip\noindent
\emph{The case $\ell=1$.} In this case
\[
H_1(N) = \sum_{\substack{n\le N\\ \omega(n)=\Omega(n)=1}}\frac1n
= \sum_{p\le N} \frac1p
= \log\log N + O(1) = L + O(1),
\]
by Mertens' theorem. Hence $H_1(N)\gg L = L^1/1!$, which is
\eqref{eq:H-ell-lower} for $\ell=1$.

\smallskip\noindent
\emph{The case $\ell\ge 2$.} Then $2\le \ell\le (1-\eta)L$.
By partial summation, we have
\[
H_\ell(N)
= \sum_{\substack{n\le N\\ \omega(n)=\Omega(n)=\ell}} \frac1n
= \frac{A_\ell(N)}{N} + \int_1^N \frac{A_\ell(t)}{t^2} dt.
\]
Since $A_\ell(t)/t\ge 0$ for $t \geq 1$, we have
\begin{equation}\label{eq:H-ell-int}
H_\ell(N) \ge \int_2^N \frac{A_\ell(t)}{t^2} dt.
\end{equation}

We next apply \eqref{eq:sathe-selberg-squarefree} with $x=t$. To ensure that the uniformity range for $\ell$ is satisfied, we restrict the
integral to a suitable subinterval where $\log\log t$ is comparable to $L$.
Set \(I := \left\{t\in[2,N] : \tfrac12 L \le \log\log t \le L \right\}\). Thus, for $t\in I$,
\[
\ell \le (1-\eta)L \le 2(1-\eta)\log\log t.
\]
Choose $\varepsilon=2\eta$, so that $2(1-\eta) = 2-\varepsilon$. Then for
all $t\in I$ and all $2\le \ell\le (1-\eta)L$ we have
\[
1\le \ell\le (2-\varepsilon)\log\log t.
\]
Hence \eqref{eq:sathe-selberg-squarefree} applies to $A_\ell(t)$ with $x=t$ for all $t\in I$, giving
\begin{equation}\label{eq:A-ell-asymp}
A_\ell(t)
= G\left(\frac{\ell-1}{\log\log t}\right)
  \frac{t}{\log t}
  \frac{(\log\log t)^{\ell-1}}{(\ell-1)!}
  \left(1 + O_{\eta}\left(\frac{\ell}{(\log\log t)^2}\right)\right)
\end{equation}
for all $t\in I$. Substituting \eqref{eq:A-ell-asymp} into \eqref{eq:H-ell-int} and restricting the integral to $t\in I$, we obtain
\begin{align*}
H_\ell(N)
 &\ge \int_{t\in I} \frac{A_\ell(t)}{t^2} dt\\
&= \frac1{(\ell-1)!}\int_{t\in I}
   G\left(\frac{\ell-1}{\log\log t}\right)
   (\log\log t)^{\ell-1}
   \frac{dt}{t\log t}
   \left(1 + O_{\eta}\left(\frac{\ell}{(\log\log t)^2}\right)\right).
\end{align*}For $t\in I$ we have $\log\log t\asymp L$, and since
$\ell\le (1-\eta)L$, we have
\[
\frac{\ell}{(\log\log t)^2}
\ll_\eta \frac1L.
\]
Thus, for $N$ sufficiently large, the factor
$1 + O_{\eta}\bigl(\ell/(\log\log t)^2\bigr)$ is bounded below by, say, $1/2$ on~$I$.
Moreover, for $t\in I$ we have
\[
0 \le \frac{\ell-1}{\log\log t}
\le \frac{(1-\eta)L}{L/2}
= 2(1-\eta) < 2,
\]
so that the argument of $G$ stays in the compact interval
$[0,2(1-\eta)]\subseteq[0,2)$. By continuity and positivity of $G$ on $[0,2)$,
there exists a constant $c_\eta>0$ such that
\[
G(z) \ge c_\eta \qquad\text{for all } 0\le z\le 2(1-\eta).
\]Therefore, using the substitution $u=\log\log t$ (so $du = dt/(t\log t)$), we
obtain
\begin{align*}
H_\ell(N)
&\ge \frac{1}{2(\ell-1)!} \int_{t\in I}
   G\left(\frac{\ell-1}{\log\log t}\right)
   (\log\log t)^{\ell-1}
   \frac{dt}{t\log t}
\\&\ge
\frac{c_\eta}{2(\ell-1)!}
\int_{\frac12 L}^{L} u^{\ell-1} du.
\end{align*}
The last integral can be evaluated explicitly:
\[
\int_{\frac12 L}^{L} u^{\ell-1} du
= \frac{L^\ell}{\ell}\left(1 - 2^{-\ell}\right)
\ge \frac{L^\ell}{2\ell}.
\]Hence
\[
H_\ell(N)
\ge
\frac{c_\eta}{2(\ell-1)!}\cdot \frac{L^\ell}{2\ell}
= \frac{c_\eta}{4} \frac{L^\ell}{\ell!}.
\]This shows that for all sufficiently large $N$ and all integers
$2\le \ell\le (1-\eta)L$ we have
\[
H_\ell(N) \ge C_\eta\,\frac{L^\ell}{\ell!}
\]
with $C_\eta = c_\eta/4>0$ depending only on~$\eta$. Together with the case
$\ell=1$ treated above, this proves that
\[
H_\ell(N) \gg_\eta \frac{L^\ell}{\ell!}
\]
uniformly for $1\le\ell\le(1-\eta)L$ as $N\to\infty$. This is~\eqref{eq:H-ell-lower}, and the lemma is proved.\qed


\begin{thebibliography}{99}



\bibitem{AAC12}
N.~Alon, A.~Shpilka, and C.~Umans,
\emph{On sunflowers and matrix multiplication},
in: \textit{Proceedings of the 27th Conference on Computational Complexity (CCC 2012)},
IEEE, 2012.



\bibitem{EP856}
T.~F.~Bloom, Erd\H{o}s Problem \#856, available at
\url{https://www.erdosproblems.com/856}, accessed 2025-12-10.

\bibitem{EP857}
T.~F.~Bloom, Erd\H{o}s Problem \#857, available at
\url{https://www.erdosproblems.com/857}, accessed 2025-12-10.

\bibitem{DEGKM97}
W.~A.~Deuber, P.~Erd\H{o}s, D.~S.~Gunderson, A.~V.~Kostochka, and A.~G.~Meyer,
Intersection statements for systems of sets,
\emph{J. Combin. Theory Ser. A} \textbf{79} (1997), 118--132.

\bibitem{Er70}
P.~Erd\H{o}s, Some extremal problems in combinatorial number theory,
in: \textit{Mathematical Essays Dedicated to A.~J.~Macintyre}, 1970,
pp.~123--133.

\bibitem{ErSz78}
P.~Erd\H{o}s and Endre Szemer\'edi, Combinatorial properties of systems of sets, \emph{J. Combin. Theory Ser. A} \textbf{24} (1978), 308--313.



\bibitem{LJD20}
J.~D.~Lichtman, Almost primes and the Banks--Martin conjecture,
\emph{J. Number Theory} \textbf{211} (2020), 513--529.

\bibitem{MoVa06}
H.~L.~Montgomery and R.~C.~Vaughan,
\textit{Multiplicative Number Theory~I: Classical Theory},
Cambridge University Press, 2006.


\bibitem{NaSa17}
E.~Naslund and W.~Sawin, Upper bounds for sunflower-free sets,
\emph{Forum Math. Sigma} \textbf{5} (2017), e15.




\end{thebibliography}
\end{document}